\newcommand{\rinst}{M_{\mathrm{inst}}}
\newcommand{\rcum}{M}
\DeclarePairedDelimiter{\norm}{\lVert}{\rVert}
\theoremstyle{theorem}
\newtheorem{lemma}{Lemma}[section]
\newtheorem{corollary}[lemma]{Corollary}
\newtheorem{theorem}[lemma]{Theorem}
\theoremstyle{definition}
\newtheorem{definition}[lemma]{Definition}
\theoremstyle{remark}
\newtheorem{remark}[lemma]{Remark}
\def\ps@pprintTitle{%
	\let\@oddhead\@empty
	\let\@evenhead\@empty
	\def\@oddfoot{\centerline{\thepage}}%
	\let\@evenfoot\@oddfoot}
\begin{document}

\begin{acronym}
	\acro{CTMC}[CTMC]{Continuous Time Markov Chain}
	\acro{MRP}[MRP]{Markov Reward Process}
	\acro{SPN}[SPN]{Stochastic Petri Net}
	\acro{PEPA}[PEPA]{Performance Evaluation Process Algebra}
\end{acronym}

\begin{frontmatter}

\title{%
	Computing performability measures in Markov chains by means of
	matrix functions\tnoteref{t1}}

\tnotetext[t1]{This research has been partially supported by the ISTI-CNR project ``TAPAS: Tensor algorithm for performability analysis of large systems'', 
by the INdAM/GNCS project 2018 ``Tecniche innovative
per problemi di algebra lineare'', and 
by the
Region of Tuscany (Project ``MOSCARDO - ICT technologies for
structural monitoring of age-old constructions based on wireless
sensor networks and drones'', 2016--2018, FAR FAS).%
}

\author[di,isti]{G.~Masetti}
\ead{giulio.masetti@isti.cnr.it}

\author[dm,isti]{L.~Robol\corref{cor1}\fnref{fn1}}
\ead{leonardo.robol@unipi.it}

\cortext[cor1]{Corresponding author}
\fntext[fn1]{This author is a member of the INdAM Research
	group GNCS.}
\address[di]{Department of Computer Science, Largo B. Pontecorvo 3, 
	Pisa, 56127, Italy.}
\address[dm]{Department of Mathematics, Largo B. Pontecorvo 5, 
	Pisa, 56127, Italy.}
\address[isti]{Institute of Science and Technology ``A. Faedo'', 
	Via G. Moruzzi, 1, 56124, Pisa, Italy.}


\begin{abstract}
We discuss the efficient computation of performance, 
reliability, and availability measures for Markov chains; these
metrics --- and the ones obtained by combining them, are often 
called performability measures. 

We show that this computational problem
can be recasted as the evaluation of a bilinear form induced by
appropriate matrix functions, and thus solved by leveraging the
fast methods available for this task. 

We provide a comprehensive analysis of the theory required to translate
the problem from the language of Markov chains to the one of matrix
functions. The advantages of this new formulation are discussed, 
and it is shown that this setting allows to easily study the
sensitivities of the measures with respect to the model
parameters. 

Numerical experiments confirm the effectiveness of our approach;
the tests we have run show that we can 
outperform the solvers available in state of the art commercial 
packages on a representative set of large scale examples.

 \begin{keyword}
 	Markov chains\sep Performance measures\sep Availability\sep Reliability\sep Matrix functions
 \end{keyword}
\end{abstract}

\end{frontmatter}
\section{Introduction}
\label{sec:intro}

Performance and dependability models are ubiquitous~\cite{TB17} in
design and assessment of physical, cyber or cyber-physical systems and
a vast ecosystem of high level formalisms has been developed to
enhance the expressive power of \acp{CTMC}.  Examples include dialects
of \acp{SPN} such as Stochastic Reward Nets~\cite{TB17}, Queuing
networks~\cite{BOP01}, dialects of \ac{PEPA}~\cite{H96}, and more.
High level formalisms are to \ac{CTMC} what high level programming
languages are to machine code; in this setting, performance and
dependability measures are usually defined following high level
formalisms primitives.  Once model and measures have been defined,
automatic procedures synthesize, transparently to the modeler, a
\ac{CTMC} and a \emph{reward structure} on it, producing a \ac{MRP},
and the measures of interest are derived as a function of the reward
structure.

The main contribution of this paper is to show that the computation of these
measures can be recasted in the framework of 
\emph{matrix functions}, and therefore enable the use
of fast Krylov-based methods for their computation. In
particular, we provide a translation table that directly maps
common performability measures to their matrix function formulation. Moreover,
our approach is easily extendable to other kinds of measures. 
Matrix functions are a fundamental tool in numerical analysis, and arise in 
different areas of applied mathematics \cite{higham2008functions}. 
For instance, they are used in the
evaluation of centrality measures 
for complex networks \cite{estrada2010network}, 
in the computation of 
geometric matrix means that find applications in radar \cite{barbaresco2009new}
and image processing techniques \cite{fletcher2007riemannian,rathi2007segmenting}, 
as well as in the study and efficient solution of system of ODEs \cite{al2011computing,hochbruck1998exponential} and PDEs \cite{yang2011novel}. 

Many numerical problems in these settings can
be rephrased as the evaluation
of a bilinear form $g(v, w) = v^T f(A) w$, where $f(\cdot)$ is an 
assigned (matrix) function, $A$ a matrix, and $v$ and $w$ vectors. 
Often, the interest is in the approximation of $g(v, w)$ for a specific
choice of the arguments $v, w$, and so an explicit computation of $f(A)$
is both unnecessary and too expensive. 

Therefore, one has to resort to more efficient methods, trying to exploit
the structure of $A$ when this is available. For instance, in \cite{fenu2013network}
the authors describe an application to network analysis that involves
a symmetric $A$ (the adjacency matrix of an undirected graph),
 and they propose
 a Gauss quadrature scheme that provides guaranteed lower and 
upper bounds for the value of $g(v, w)$. Other approaches exploiting
banded and rank structures in the matrices, often encountered
in Markov chains, can be found in
\cite{benzi2014decay,massei2017decay}. These properties have already
been exploited for the steady-state analysis of QBD processes \cite{bini2017efficient,bini2017decay}.

We prove that performability measures defined in \ac{CTMC}s can be
rephrased in this form as well. In particular they can be written as
$g(v, w) = v^T f(Q) w$, where $Q$ is the infinitesimal generator of
the Markov chain (also called Markov chain transition matrix), 
and $f(\cdot), Q, v$, and $w$ are chosen
appropriately.  \acp{CTMC} arise when modeling the behavior of
resource sharing systems~\cite{BOP01}, software, hardware or
cyber-physical systems~\cite{TB17}, portfolio optimization \cite{dai},
and the evaluation of performance~\cite{BOP01},
dependability~\cite{ALRL04,T01} and performability~\cite{M82} measures
that we are going to rewrite as bilinear forms $g(v, w)$. These models
are obtained with different high-level formalism; however, all of them
are eventually represented as CTMCs.

A simple example, which is analyzed in more detail in Section~\ref{sec:example1},
can be constructed by a set of $9$ states, numbered from $0$ to $8$, 
assuming that we can transition from state $i$ to $i+1$ with an exponential
distribution with 
parameter $\rho_1$, and from $i$ to $i-1$ with rate $\rho_2$. Pictorially,
this can be represented using the following reachability graph:
\begin{center}
\begin{tikzpicture}
\foreach \x in {0, ..., 8} {
	\draw (\x, 0) circle (.3);
	\node at (\x, 0) {$\x$};
}
\foreach \x in {1, ..., 8} {
	\draw[->] plot[smooth] coordinates{(\x-.8,.3) (\x-.6,.4) (\x-.4,.4) (\x-.2,.3)};
	\node at (\x-.5,.7) {$\rho_1$};
	\draw[->] plot[smooth] coordinates{(\x-.2,-.3) (\x-.4,-.4) (\x-.6,-.4) (\x-.8,-.3)};
	\node at (\x-.5,-.7) {$\rho_2$};
}
\end{tikzpicture}
\end{center}
In this case, the infinitesimal generator matrix $Q$ has the following structure:
\[
Q = \begin{bmatrix}
-\rho_{2} & \rho_2 \\
\rho_1    & -(\rho_1 + \rho_2) & \rho_2 \\
& \ddots & \ddots & \ddots \\
& & \rho_1 & -(\rho_1 + \rho_2) & \rho_2 \\
& & & \rho_1 & -\rho_1 \\
\end{bmatrix},
\]
where $Q_{ij}$ is nonzero if and only if there is an edge from state $i$ to $j$.

Rephrasing performability measures using the language of matrix functions opens also new direction of research regarding how structures that are visible at the level of these high level formalism are reflected into the \ac{CTMC} infinitesimal generator, and then can be exploited to enhance measures' evaluation.

The paper is structured as follows. 
We start with a brief discussion of modeling and matrix function notations in Section~\ref{sec:notation}.
The main contribution is presented in Section~\ref{sec:examples}, 
where we give a dictionary for the conversion
of standard performance, dependability and performability 
measures in the parlance of matrix functions.

Using this new formulation, we present a new solution method 
in Section~\ref{sec:efficient}, and we demonstrate 
in Section~\ref{sec:sensitivity} that this framework
can be used to describe in an elegant and powerful form the \emph{sensitivity}
of the measures, which is directly connected to the Frech\'et derivative
of $f(\cdot)$ at $Q$. We show that, when an efficient scheme for the
evaluation of $g(v, w)$ is available, the sensitivity of the measures
can be estimated at almost no additional cost.

Finally, we perform numerical tests on three relevant case studies in Section~\ref{sec:numerical}. The new method is shown to be efficient with respect to state-of-the-art solution techniques implemented in commercial level software tools. We also test our method to
compute the sensitivity of measures as described in 
Section~\ref{sec:sensitivity}. The numerical experiments
demonstrate that the computation requires less than
twice the time needed to simply evaluate the measure.

To enhance readability, mathematical details are presented in the appendix.
In particular, a brief discussion of the spectral properties of $Q$ 
that are most relevant for our study is offered in appendix~\ref{sec:spectral}; then, 
we give a summary of the currently
available methods to compute performability measures
in appendix~\ref{sec:STDmethods} and standard notions about matrix functions in appendix~\ref{sec:basics}. 
Details about the translation to matrix functions are presented in appendix~\ref{sec:rephrasing}, where we provide a proof of 
the two main lemmas, Lemma~\ref{lem:inst} and Lemma~\ref{lem:acc},
and we present additional remarks about their application. 

\section{Performability measures as matrix functions}

\subsection{Model and notation}
\label{sec:notation}

We recall
that Markov chains are stochastic processes
with the \emph{memory-less} property, that is the
probability of jumping from state $i$ to state $j$ after some time $t$ 
depends only on $i$ and $j$, and not on the previous history. 
We consider \acp{CTMC} where the state space is finite so, without loss
of generality, we assume it to be $[n] := \{1, \ldots, n\}$. 

In addition, we assume that the probability of jumping from
$i$ to $j$ is distributed
with a given exponential rate $\lambda_{ij}$, so that we may define
a matrix $Q$ with entries 
\[
  Q_{ij} = \begin{cases}
    \lambda_{ij} & \text{if } i \neq j \\
    - (\lambda_{1i} + \ldots +  \lambda_{n,i})  & \text{if } i = j \\
  \end{cases}, 
\]
where we set $\lambda_{ii} = 0$ for any $i = 1, \ldots, n$. With this 
definition, given a certain initial probability distribution 
$\pi_0^T$, 
where the entry with index $i$ corresponds to the probability
of being in the state $i$, the probability at time $t$ can be 
expressed as 
\[
  \pi(t)^T = \pi_0^T e^{tQ}. 
\]
The stationary (or steady-state)
distribution $\pi$, that is the limit of $\pi(t)$
for $t \to \infty$ is guaranteed\footnote{The uniqueness and existence of $\pi$ is discussed
	in appendix~\ref{subsec:model} in further detail.} to exist if the process
is \emph{irreducible}. If the process has \emph{absorbing states},
then the stationary distribution might not exists or not be unique; in
this case, it is typically of interest to study the transient behavior
of the process. 

From the modeling perspective, the steady-state distribution 
describes the long-term behavior of a system. However, when
assessing the performance and reliability of processes modeling real-world
phenomena, it is essential to characterize the \emph{transient} phase as
well, that is the behavior between the initial configuration 
and the steady-state. Moreover, the transient state is relevant
also when $X(t)$ is reducible, even though
the steady-state distribution is not well-defined in this case.

In practice, if the system has a large number of states, computing
$\pi(t)$ at some time $t$ is not the desired measure; 
it is far more interesting to 
obtain concise information by ``postprocessing'' $\pi(t)$ in
an appropriate way. Note that, in this framework, it is expected
that $\pi(t)$ depends on the initial choice of $\pi_0$, so that is 
an important parameter that needs to be known. 

Let us make an example to further clarify this concept. Consider a
Markov chain $X(t)$, with infinitesimal generator $Q$, modeling a 
publicly available service. Assume that 
we can partition the states in two sets $\mathbf u$ and $\mathbf d$. 
The first contains the states where the system is \emph{online} (``up''),
whereas
the second the ones where the system is offline (``down''). Up to permuting the states, we can partition
the matrix $Q$ according to this splitting:
\[
  Q =
  \begin{bmatrix}
    Q_{\mathbf u} &  Q_{\mathbf u \mathbf d} \\
    Q_{\mathbf d \mathbf u} &  Q_{\mathbf d} \\
  \end{bmatrix}. 
\]
In a certain interval
of time $[0, t]$, we would like to know how long the system is expected
to stay online. This can be measured by computing 
integral
\[
  U(t) = \int_0^t \mathbb P\{ X(\tau) \in \mathbf u \}\ d\tau.
\]
We note that $\mathbb P\{ X(\tau) \in \mathbf u \} = 
\langle \pi(\tau) , \mathbbm 1_{\mathbf u} \rangle$, 
where $\mathbbm 1_{\mathbf u}$ is the vector with ones in the states
of $\mathbf u$, and $0$ otherwise, and 
$\langle \cdot, \cdot \rangle$ denote the usual scalar product. 
This is what we call the
\emph{uptime} of the system in $[0, t]$. If the set $\mathbf d$
is the set of absorbing states, then this measure coincides
with the \emph{reliability} of the system.

This is an instance of a broader
class of measures that belong to the field of
\emph{performance}, \emph{availability}
and \emph{reliability} modeling. In the literature, 
the terms performance and availability refer to measures
depending on the steady-state, whereas reliability concerns the
transient phase of a reducible chain. 
Measures combining performance and availability (or reliability 
in the reducible case) are called \emph{performability measures}
\cite{meyer1980evaluating}. 

This paper is concerned
with the efficient computation of such measures, which will be obtained
by rephrasing the problem as the evaluation of a bilinear form
induced by a matrix function. 

The main tool used to devise fast algorithms for the evaluation
of these measures is recasting them as the computation of matrix
functions. Informally, given any square matrix $A$ and a
function $f(z)$ which can be expanded as a power series,
a matrix function $f(A)$ is defined as:
\[
  f(A) := \sum_{j \geq 0} c_j A^j, \qquad
  \text{where }
  f(z) = \sum_{j \geq 0} c_j z^j. 
\]
The most well-known example is the case $f(A) = e^A$, where
$c_j = \frac{1}{j!}$. Another function of interest in this paper
is $\varphi_1(z) := (e^{z} - 1) / z$. 
The definition holds in a more general setting, as
discussed in appendix~\ref{sec:basics}.

Let us fix some notation. We denote by $r$ a fixed weight
vector of $n$ elements, so that $r_{X(t)}$ is a process
whose value at time $t$ corresponds to entry of index $X(t)$ 
in $r$. In most cases, $r$ will be the \emph{reward vector}, containing a ``prize'' assigned for being a certain state. 
We shall give two definitions of reward measures, to simplify the
discussion of their computation later on. 

\begin{definition} \label{def:inst-perfomability}
	Let $r$ be a reward vector, and $X(t)$ a Markov chain with
	infinitesimal generator $Q$. Then, the number
	$\mathbb E[r_{X(t)}]$ is called \emph{instantaneous reward measure
		 at time $t$}, 
	and is denoted by $\rinst(t)$. Similarly, the number
	\[
	  \rcum(t) =  \int_{0}^t \mathbb E\left[ r_{X(\tau)} \right] \ d\tau = 
	  \int_{0}^t \rinst(\tau) \ d\tau
	\]
	is called \emph{cumulative reward measure at time $t$}. 
\end{definition}

Note that both definitions depend on the choice of the reward vector
$r$. This dependency is not explicit in our notation to make it more
readable --- in most of the following examples the current choice of $r$ 
will be clear from the context. Occasionally, we will make this 
dependency explicit by saying that a measure is associated
	with a reward vector $r$. 

Intuitively, the instantaneous reward measures
the probability of being in a certain set of states (the non-zero
entries of $r$), weighted according to the values of the components of $r$. 
The cumulative version is averaged over the time interval $[0, t]$.
For instance, for the uptime we had $r := \mathbbm 1_{\mathbf u}$. 

In the remaining part of this section, we show that reward measures
can be expressed as $\pi_0^T f(Q) r$, for a certain reward vector $r$ and 
an appropriate function $f(z)$. 

The next result is the first example of this construction, and 
concerns instantaneous reward measures. 

\begin{lemma} \label{lem:inst}
    Let $\rinst(t)$ be an instantaneous reward measure
    associated with a vector $r$ and a Markov process
    generated by $Q$ and with
    initial state $\pi_0$. 
    Then, $\rinst(t) = \pi_0^T f(Q) r$, with 
    $
      f(z) := e^{tz}. 
    $
\end{lemma}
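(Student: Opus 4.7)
The plan is to unfold the definition of the instantaneous reward measure and recognize the matrix exponential that appears in the evolution of $\pi(t)$. Since $X(t)$ takes values in the finite set $[n]$, the expectation in $\rinst(t) = \mathbb E[r_{X(t)}]$ is a finite sum, and the first step is simply to rewrite it as
\[
  \rinst(t) = \sum_{i=1}^n r_i\, \mathbb P\{X(t) = i\} = \langle \pi(t), r \rangle = \pi(t)^T r,
\]
by the definition of $\pi(t)$ as the probability distribution of $X(t)$.

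Next I would substitute the explicit representation of $\pi(t)$ given earlier in the excerpt, namely $\pi(t)^T = \pi_0^T e^{tQ}$, to obtain
\[
  \rinst(t) = \pi_0^T e^{tQ}\, r.
\]
Finally, I would identify the matrix $e^{tQ}$ as $f(Q)$ where $f(z) = e^{tz}$, which is a legitimate matrix function since $f$ is entire (its power series $\sum_{j \geq 0} \frac{t^j z^j}{j!}$ converges everywhere), matching the definition of matrix function recalled earlier in the text. This yields the claim $\rinst(t) = \pi_0^T f(Q) r$.

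There is essentially no serious obstacle here: the lemma is a direct consequence of the Kolmogorov forward equation $\pi(t)^T = \pi_0^T e^{tQ}$, which was already recorded in Section~\ref{sec:notation}. The only point that deserves a brief remark is the consistency between the analytic definition $f(z) = e^{tz}$ (parameter $t$ fixed) and the matrix exponential $e^{tQ}$; this is immediate from the power series since $\sum_j \frac{(tQ)^j}{j!} = \sum_j \frac{t^j}{j!} Q^j = f(Q)$. No assumption on irreducibility or on the existence of a steady state is needed, so the proof applies equally to transient and reducible chains.
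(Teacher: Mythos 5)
Your proof is correct and follows exactly the route the paper takes: the paper's own proof is the one-line observation that the claim follows from $\pi(t)^T = \pi_0^T e^{tQ}$, and you have simply spelled out the intermediate steps (expanding $\mathbb E[r_{X(t)}]$ as $\pi(t)^T r$ and identifying $e^{tQ}$ with $f(Q)$). Nothing is missing and nothing differs in substance.
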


\begin{proof}
	The equality follows immediately by the relation $\pi(t)^T = \pi_0^T e^{tQ}$. 
\end{proof}

A similar statement holds for the cumulative reward measure. 

\begin{lemma} \label{lem:acc}
    Let $\rcum$ be an cumulative reward measure, as 
    in Definition~\ref{def:inst-perfomability}. 
    Then, $\rcum(t) = \pi_0^T f(Q) r$, with 
    \[
    f(z) := t \varphi_1(tz) = \begin{cases}
     \frac{e^{tz} - 1}{z} & z \neq 0 \\
     t & z = 0 \\
    \end{cases}
    \]
  \end{lemma}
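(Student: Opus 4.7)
The plan is to reduce the cumulative reward case to the instantaneous one and then identify the resulting integrated matrix exponential as a matrix function in the sense of the paper's definition. Concretely, I would first apply Lemma~\ref{lem:inst} to rewrite the integrand in the defining expression
\[
\rcum(t) = \int_0^t \rinst(\tau)\, d\tau = \int_0^t \pi_0^T e^{\tau Q} r\, d\tau.
\]
Since $\pi_0$ and $r$ do not depend on $\tau$, and the matrix exponential is (entrywise) continuous in $\tau$, I would pull the bilinear form outside the integral to obtain $\rcum(t) = \pi_0^T \bigl(\int_0^t e^{\tau Q}\, d\tau\bigr) r$.

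The core step is then to evaluate $\int_0^t e^{\tau Q}\, d\tau$ and recognize it as $f(Q)$ for the claimed $f$. Writing $e^{\tau Q} = \sum_{j \geq 0} \frac{\tau^j}{j!} Q^j$ and integrating term by term (justified by the uniform convergence of the exponential series on the bounded interval $[0,t]$) yields
\[
\int_0^t e^{\tau Q}\, d\tau = \sum_{j \geq 0} \frac{t^{j+1}}{(j+1)!}\, Q^j = t \sum_{j \geq 0} \frac{(tQ)^j}{(j+1)!} = t\, \varphi_1(tQ),
\]
where $\varphi_1(z) = \sum_{j \geq 0} \frac{z^j}{(j+1)!}$ is an entire function. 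Since $\varphi_1$ is a convergent power series, the matrix function formalism from the previous paragraphs applies and this is exactly $f(Q)$ for $f(z) = t\,\varphi_1(tz)$.

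To close, I would verify the scalar identity $t\,\varphi_1(tz) = (e^{tz}-1)/z$ for $z \neq 0$ by summing the geometric-like series (or by noting that $z\varphi_1(z) = e^z - 1$), and observe that the removable singularity at $z=0$ takes the value $t\,\varphi_1(0) = t$, matching the case distinction in the statement. The main obstacle is largely presentational rather than technical: one must make sure the exchange of integral and series is justified, and that the matrix function $t\,\varphi_1(tQ)$ is properly defined via the entire series for $\varphi_1$ rather than via the formula $(e^{tz}-1)/z$, since $Q$ is typically singular (as $Q\mathbbm{1} = 0$) and so the naive expression $Q^{-1}(e^{tQ}-I)$ is not available.
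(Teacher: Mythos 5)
Your proof is correct, but it takes a genuinely different route from the paper's at the key step. Both arguments begin identically, pulling the bilinear form out of the integral to get $\rcum(t) = \pi_0^T \bigl(\int_0^t e^{\tau Q}\, d\tau\bigr) r$. From there the paper assumes $Q$ diagonalizable, writes $Q = VDV^{-1}$, evaluates the scalar integral $\int_0^t e^{\tau z}\, d\tau = (e^{tz}-1)/z$ on each eigenvalue, and then extends to arbitrary $Q$ by density of diagonalizable matrices together with continuity of $f$. You instead integrate the exponential power series term by term to obtain $\int_0^t e^{\tau Q}\, d\tau = \sum_{j \geq 0} \frac{t^{j+1}}{(j+1)!} Q^j = t\,\varphi_1(tQ)$ directly, with no spectral hypothesis on $Q$. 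Your version buys a self-contained argument that avoids the density-plus-continuity limiting step, and it makes immediately visible that $f$ is entire --- hence well defined at the singular matrix $Q$ --- a point the paper only addresses in a separate remark after its proof. The paper's version is shorter and makes the origin of the formula $(e^{tz}-1)/z$ transparent (it is literally the scalar antiderivative), at the cost of the extra limiting argument. The only presentational point to tighten in yours is to note explicitly that, for an entire function given by a convergent power series, the series definition $\sum_j c_j Q^j$ agrees with the Jordan-form definition of $f(Q)$ used in Definition~\ref{def:matfun}; this is standard and consistent with the informal power-series definition the paper itself gives in Section~\ref{sec:notation}.
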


  \begin{proof}    
    The proof of this Lemma is given in \ref{sec:rephrasing}. 
  \end{proof}

\section{Translation of performability measures into matrix functions}
\label{sec:examples}

The purpose of this section is to construct a ready-to-use dictionary
for researchers involved in modeling that can be used to translate
several known measures to the matrix function formulation with little
effort. This is achieved by applying some theoretical results  which,
to ease the reading, are discussed in \ref{sec:rephrasing}. Following
these results, one can derive analogous formulations for additional
performability measures.

For many measures, it is important to identify a set of states
that correspond to the \emph{online} or \emph{up} state: when the system is 
in one of those states then it is functioning correctly. To keep
a uniform notation, we refer to this set as 
$\mathbf u \subseteq \{ 1, \ldots, n\}$. Its complement, 
the \emph{offline} or \emph{down} states, will be denoted 
by $\mathbf d$. In the case of reducible Markov chains, the
set of ``down'' states typically coincides with the absorbing
ones. This hypothesis is necessary for some measures (such as
the mean time to failure) in order to make them well-defined. 

Clearly,
the actual meaning of being ``up'' or ``down'' 
may change dramatically from
one setting to another; but the computations involved are
essentially unchanged --- and therefore we prefer to keep this
nomenclature to present a
unified treatment. 
A summary of all the different reformulations in this section, 
with references to the location where the details are discussed, 
is given in Table~\ref{tab:translations}. 

\subsection{Instantaneous reliability}
\label{sec:inst-reliability}

We consider the case of a reducible chain, with a set of absorbing
states (the ``down'' states). We are concerned with
determining the probability of being in an ``up'' state at any
time $t$. This measure is called \emph{instantaneous reliability}. 

This measure can be computed considering the probability of being
in a state included in the set $\mathbf u$, i.e., 
\[
  R(t) = \mathbb P\{ X(t) \in \mathbf u \} = \sum_{i \in \mathbf u} \pi_i(t)
    = \pi(t)^T r, 
\]
where $r = \mathbbm 1_{\mathbf u}$ 
is the vector with components $1$ on the indices in $\mathbf u$, 
and zero otherwise. Therefore, this availability measure is 
rephrased in matrix functions terms as $R(t) = \pi_0^T e^{tQ} r$. 

In the same way, we may define the measure $F(t) = 1 - R(t)$, which is
the probability of being in ``down'' state at the time $t$. Notice that 
this can also be expressed in matrix function form by 
$F(t) = \pi_0^T e^{tQ} (\mathbbm 1 - r)$. 

\subsection{Instantaneous availability}
\label{sec:inst-availability}

In irreducible Markov chains, the \emph{instantaneous availability}
is the analogue of the reliability described in the previous section, 
that is, we measure the probability of the system being ``up'' 
at any time $t$. 
We note that, mathematically, the definition of reliability and availability coincide 
but the former term is considered when dealing with reducible Markov chains, whereas the latter is employed for irreducible ones.
In particular, the availability can be expressed in matrix function form as follows:
$A(t) = \pi_0^T e^{tQ} \mathbbm 1_{\mathbf u}$. 

When the states in $\mathbf u$ correspond to the working 
state of at least $k$ components out of $n$, this measure is
often called the \emph{$k$-out-of-$n$ availability} of the system.

\begin{table}
	\centering
	\begin{tabular}{l|cccc}
		{Measure} & Function & Reward vector & Matrix & Reference \\ \toprule
		Inst. reliability & $e^{tz}$ & $\mathbbm 1_{\mathbf u}$ & $Q$ & Section~\ref{sec:inst-reliability} \rule{0pt}{1.3em}\\
		Inst. availability & $e^{tz}$ & $\mathbbm 1_{\mathbf u}$ &$Q$ 
		& Section~\ref{sec:inst-availability} \rule{0pt}{1.3em}\\
		MTTF & $-\frac{1}{z}, \ t\varphi_1(tz) $ & $\mathbbm 1_{\mathbf u}$ &$Q_{\mathbf u},\  Q$ & Section~\ref{sec:mttf}
		\rule{0pt}{1.3em} \\
		Exp. \# failures & $t\varphi_1(tz)$ & $\mathbbm 1$ & $Q_{\mathbf u \mathbf d}$ & Section~\ref{sec:expnumberfailures} 
		\rule{0pt}{1.3em}\\ 
		Uptime & $t\varphi_1(tz)$ & $\mathbbm 1_{\mathbf u}$ &$Q$ & Section~\ref{sec:uptime} 
		\rule{0pt}{1.3em}\\
		Average clients & $e^{tz}, \delta(z)$ & $[ 0, 1, \ldots, n-1 ]^T$ &$Q$ & Section~\ref{sec:clients}
		\rule{0pt}{1.3em}
	\end{tabular}
	
	\caption{Summary of the equivalence between performability measures
		and matrix functions, with the corresponding reward vector. The
		details on the interpretation of the set $\mathbf u$ and 
		on the reformulation are given in the linked 
		sections.}
	\label{tab:translations}
\end{table}

\subsection{Mean time to failure} 
\label{sec:mttf}

We consider the expected time of failure for a model. This measure
is relevant for devices which fail, and cannot be repaired. In particular, 
it is possible to exit from states in $\mathbf u$, but one can
never go back again: the Markov chain is reducible
and $\mathbf d$ is a set of absorbing states.
The average time needed to exit
$\mathbf u$ can be expressed as the average time that one spends
inside $\mathbf u$. In probabilistic terms, 
\[
  \mathrm{MTTF} = \int_0^{\infty} \mathbb E[(\mathbbm 1_{\mathbf u})_{X(\tau)}]\ d\tau,
\]
where $(\mathbbm 1_{\mathbf u})_{X(\tau)}$ denotes the component of index $X(\tau)$
in the vector $\mathbbm 1_{\mathbf u}$.
For this measure to
be finite, it is necessary that all the states inside 
of $\mathbf u$ have zero probability in the steady-state. 
This can be rephrased using $\pi(t)$ as follows:
\[
  \mathrm{MTTF} = \sum_{i \in \mathbf u} \int_{0}^{\infty} \pi_i(\tau)\ d\tau. 
\]
Here, one could be tempted to apply Lemma~\ref{lem:acc} directly, but 
this is not feasible. In fact, taking the limit of $t$ to $\infty$ 
for $f(z)$ gives $f(z) = z^{-1}$, which has a pole at $0$, 
and $Q$ is always singular. However, one can notice that, for $i \in \mathbf u$, 
we have $\pi_i(t) = ( \pi_{0,\mathbf u}^T e^{t Q_{\mathbf u}} )_i$ 
where $\pi_{0,\mathbf u}$ is the vector of initial conditions 
restricted to the indices in $\mathbf u$. Therefore, we can apply Lemma~\ref{lem:acc}
and take the limit of $t \to \infty$ to obtain: 
\[
  \mathrm{MTTF} = \pi_{0,\mathbf u}^T f(Q_{\mathbf u}) \mathbbm 1, \qquad 
  f(z) = -\frac{1}{z}, 
\]
which can be written simply as
$MTTF = - \pi_{0,\mathbf u}^T Q_{\mathbf u}^{-1} \mathbbm 1$. The
matrix $Q_{\mathbf u}$ is invertible\footnote{See Lemma~\ref{lem:m-matrix}}, its inverse is nonnegative
\cite{plemmons1981matrix}, and therefore $MTTF > 0$.

The same measure is often restricted to the interval
$[0, t]$. In this case, we may write
\[
  \mathrm{MTTF}(t) = \int_0^{t} \mathbb E[(\mathbbm 1_{\mathbf u})_{X(\tau)} ]\ d\tau. 
\]
We note that this formulation is well-defined even when $t$  goes to infinity. 
In fact, a direct application of Lemma~\ref{lem:acc} yields
\[
  \mathrm{MTTF}(t) = \pi_0^T f(Q) \mathbbm 1, \qquad 
  f(z) = t\varphi_1(tz) = \frac{e^{tz} - 1}{z}, 
\]
and this function does not have a pole in $0$. Nevertheless, 
also in this case it holds true that 
$\pi_0^T f(Q) \mathbbm 1 = \pi_{0,\mathbf u}^T f(Q_{\mathbf u}) \mathbbm 1$, 
and this gives a reduction in the size of the matrix whose exponential
needs to be computed, so this reformulation may be convenient in practice. 

\subsection{Expected number of failures}
\label{sec:expnumberfailures} 

We consider a system partitioned as usual in up and down states 
(denoted by $\mathbf u$ and $\mathbf d$). We are interested in 
computing the expected number of transitions between a state in 
$\mathbf u$ to a state in $\mathbf d$ (the number of failures). 

If we consider two states $i$ and $j$, then the expected 
number of transitions $N_{ij}(t)$ 
from $i$ to $j$ in a certain time
interval $[0, t]$ can be expressed as 
\[
  \mathbb{E}[N_{ij}(t)] = Q_{ij} \cdot \int_{0}^t \pi_i^T(\tau)\ d\tau. 
\]
When considering two sets of states, $\mathbf u$ and $\mathbf d$, 
this can be generalized to the expected number
of transitions from $\mathbf u$ to $\mathbf d$ by 
\[
  \int_0^t \pi(\tau) \begin{bmatrix}
  0 & Q_{\mathbf u \mathbf d} \\
  0 & 0 \\
  \end{bmatrix} \mathbbm 1\ d\tau = 
  \int_0^t \pi_{\mathbf u}(\tau) Q_{\mathbf u \mathbf d} \mathbbm 1\ d\tau
  = \pi_{0,\mathbf u}^T f(Q_{\mathbf u \mathbf d}) \mathbbm 1, 
\]
where $f(z) = \frac{e^{tz} - 1}{z}$, $\pi_{\mathbf u}(t)$ and $\pi_{0,\mathbf u}$ 
are the probability distributions restricted to the states in $\mathbf u$. 

\subsection{Uptime}
\label{sec:uptime}

The \emph{uptime} measure determines the expected 
availability of a system in a time interval $[0, t]$, 
for an irreducible Markov chain. To this end, 
we need to partition the states in online and offline, and to compute
the integral
\[
  U(t) = \int_{0}^t \mathbb E\left[ r_{X(\tau)} \right]\ d\tau, \qquad 
  r = \mathbbm{1}_{\mathbf u}. 
\]
We note that this is the integral analogous of the instantaneous 
availability defined for irreducible systems in Section~\ref{sec:inst-availability}. In fact, a straightforward
computation shows that 
\[
  U(t) = \int_0^t A(\tau)\ d\tau = \pi_0^T f(Q) \mathbbm 1_{\mathbf u}, 
  \qquad
  f(z) = t\varphi_1(tz) = \frac{e^{tz} - 1}{z}, 
\]
as predicted by Lemma~\ref{lem:acc}. We notice that this measure is 
not well-defined if we let $t$ go to infinity, since for 
every irreducible Markov chain 
$\lim_{t \to \infty} A(t) = \sum_{i \in \mathbf u} \pi_i > 0$, 
and therefore the limit of $U(t)$ needs to be infinite, because
the integrand is not infinitesimal. 

\subsection{Average number of clients} \label{sec:clients}

We now discuss
a measure which is specifically tailored to a model but, 
with the proper adjustments, can be made fit a broad number of settings. 
Assume we have a Markov chain $X(t)$ that models a queue (which might
be at some desk serving clients, a server running some software, 
or similar use cases). The state of $X(t)$ is the number of
clients waiting in the queue, and we assume a maximum number $n-1$ of 
slots. 
At any time, the process can finish
to serve a client with a rate $\rho_1$, or get a new client
in the queue with rate $\rho_2$. 

We are interested in the expected number of clients in the queue
at time $t > 0$, or at the steady state (that corresponds to $t \to \infty$). 
In the two cases, this measure can be expressed as 
\[
  \mathbb E[X(t)] = \pi(t)^T v, \qquad 
  \mathbb E[X(\infty)] = \pi^T v, \qquad 
  v = \begin{bmatrix}
    0 \\
    1 \\
    \vdots \\
    n - 1
  \end{bmatrix}, 
\]
where as usual we denote by $X(\infty)$ 
the limit of $X(t)$ to the steady-state. It
is clear that, since $\pi(t) = \pi_0^T e^{tQ}$, we can express
$\mathbb E[X(t)]$ as the bilinear form $\pi_0^T e^{tQ} v$. It is
interesting that one can express the steady state probability
in matrix function form as well. In fact, if we define $\delta (z)$
as the function equal to $1$ at $0$ and $0$ elsewhere, we can express\footnote{This is proven in
  Lemma~\ref{lem:steady-state}.}
$\mathbb E[X(\infty)]$ as $\pi_0^T \delta(Q) v$.

\section{Efficient computation of the measures}
\label{sec:efficient}

In view of the analysis of Section~\ref{sec:examples}, we are now aware that several 
measures associated with a Markov process
$X(t)$ are in fact computable by evaluating $w^T f(Q) v$, for appropriate
choices of $w, v$ and of the matrix function $f(Q)$. A straightforward
application of standard dense linear algebra
methods to compute $f(Q)$ usually has
complexity $\mathcal O(n^3)$, where $n$ is the size of the 
matrix $Q$, which in this case is the number of states of the underlying 
Markov chain. 

It is often recognized in the literature that the matrix $Q$ generating
the Markov chain is structured, and allows for a fast matrix 
vector product $v \mapsto Qv$. Typically, we can expect this operation
to cost $\mathcal O(n)$ flops, where $n$ is the number of states 
in the Markov chain. In this section we propose to leverage well
established Krylov approximation methods for the computation 
of $w^T f(Q)$, which in turn yields an algorithm
for evaluating $w^T f(Q) v$ in linear time and memory. Similar
ideas and techniques can be found in \emph{exponential integrators}, 
see \cite{gander2013paraexp}. 

Here we recall only the essential details needed to carry out 
the scheme, and we refer to \cite{frommer2014efficient} and
the references therein for further details. We now focus
on the computation of $f(Q) v$, ignoring $w$. Once this is
known, $w^T f(Q) v$ can be obtained in $\mathcal O(n)$ flops through
a scalar product. 

\subsection{Krylov subspace approximation}

The key ingredient to the fast approximation of $f(Q) v$ is the so-called 
Arnoldi process, the non-symmetric extension of the Lanczos scheme. 
From now on, we assume without loss of generality that $\norm{v}_2 = 1$. 
Consider the Krylov subspace of order $m$ generated by $Q$ and $v$
as 
\[
  \mathcal K_m(Q, v) := \mathrm{span}\{ v, Qv, Q^2v, \ldots, Q^{m-1} v\}. 
\]
Assuming no breakdown happens, the Arnoldi scheme provides an 
orthogonal basis $V_m$ for this space 
that satisfies the relation: 
\[
  QV_m = V_m H_m + h_{m+1,m} v_{m+1} e_m^T,
\]
where $H_m$ is an $m \times m$ upper Hessenberg matrix, $h_{m+1,m}$ a scalar,
$v_{m+1}$ a vector, and $e_m$ the $m$-th column of the identity matrix. This relation
is often used in the description of the classical Arnoldi method, and can be employed to
iteratively and efficiently construct the basis $V_m$. We refer the reader
to \cite{demmel1997} for further details.
This can be used to retrieve an approximation of $f(Q)v$ 
by computing $f_m = V_m f(H_m) V_m^T v = V_m f(H_m) e_1$. This approximation
has several neat features, among which we find the \emph{exactness} properties: 
the approximation $f_m$ is exact if $f(z)$ is a polynomial of degree at most 
$m - 1$. We would like to characterize how accurate is this approximation 
for generic function. To this aim, we introduce the following concept. 

\begin{definition}
  Given a matrix $Q$, the subset of the complex plane defined as 
  \[
    \mathcal W(Q) := \{ x^T Q x \ : \ \norm{x}_2 = 1 \}
  \]
  is called the \emph{field of values} of $Q$. 
\end{definition}

The above set is easily seen to be convex, and always contain the 
eigenvalues of $Q$. Whenever $Q$ is a normal matrix (for instance, 
when $Q$ is symmetric), then the set $\mathcal W(Q)$ is the convex
hull of the eigenvalues. For non-normal matrices, this set is
typically slightly larger, but the following relation holds: 
\[
  \sigma(Q) \subseteq \mathcal W(Q) \subseteq \{ 
    z \in \mathbb C \ : \ |z| \leq \norm{Q}_2 
  \} =: B(0, \norm{Q}_2), 
\]
where $\sigma(Q)$ is the spectrum, i.e., the set of eigenvalues, of $Q$.
so in particular the set is not unbounded and, in the Markov chain setting, 
it is typically to estimate $\norm{Q}_2$ to obtain a rough
approximation of its radius. 

\begin{lemma}
    Let $f(z)$ be a function defined on the field of values of $Q$, 
    and $p(z)$ a polynomial approximant of $f(z)$ such that 
    $|f(z) - p(z)| \leq \epsilon$ on $\mathcal W(Q)$. Then, 
    \[
      \norm{p(Q) - f(Q)}_2 \leq (1 + \sqrt{2}) \cdot \epsilon. 
    \]
\end{lemma}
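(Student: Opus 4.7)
The plan is to reduce the statement to the Crouzeix--Palencia theorem, which bounds the norm of a matrix function in terms of its supremum on the field of values. More precisely, I would define the auxiliary function $g(z) := f(z) - p(z)$, so that by assumption $|g(z)| \leq \epsilon$ for every $z \in \mathcal W(Q)$. Since both $p(Q)$ and $f(Q)$ are defined (the former trivially as a polynomial, the latter by hypothesis on the field of values), one has $g(Q) = f(Q) - p(Q)$, and the thesis becomes the inequality
\[
  \norm{g(Q)}_2 \leq (1 + \sqrt{2})\,\sup_{z \in \mathcal W(Q)} |g(z)|.
\]

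This is exactly the content of the Crouzeix--Palencia theorem: for any matrix $A$ and any function $h$ analytic in a neighbourhood of $\mathcal W(A)$, it holds that $\norm{h(A)}_2 \leq (1+\sqrt{2})\,\max_{z \in \mathcal W(A)} |h(z)|$. Invoking this result with $A = Q$ and $h = g$ yields the claim immediately, since $\sup_{z \in \mathcal W(Q)} |g(z)| \leq \epsilon$.

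If a self-contained argument were preferred, the hard part would be reproving the Crouzeix--Palencia inequality itself; this typically proceeds through a Cauchy-type integral representation of $g(Q)$ on a smooth boundary of a convex region containing $\mathcal W(Q)$, combined with a non-trivial reflection/duality argument that controls the resolvent $(zI - Q)^{-1}$. Since the constant $1+\sqrt{2}$ appearing in the statement is precisely the one produced by that theorem, I would not attempt to reprove it here and simply cite it as a black box.

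The only conceptual subtlety worth flagging is that the hypothesis of the lemma only says that $f$ is ``defined on'' $\mathcal W(Q)$, whereas Crouzeix--Palencia is usually phrased for functions analytic on a neighbourhood of $\mathcal W(Q)$. In the applications considered in the paper the relevant $f$'s (namely $e^{tz}$ and $t\varphi_1(tz)$) are entire, so this regularity is automatic and the step goes through without further comment; a brief remark to that effect would make the derivation fully rigorous.
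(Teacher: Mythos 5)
Your proposal is correct and follows essentially the same route as the paper: the paper also proves the lemma by directly invoking the Crouzeix--Palencia inequality (the $1+\sqrt{2}$ version of the Crouzeix conjecture), citing it rather than reproving it. Your additional remark about the analyticity hypothesis is a reasonable refinement but does not change the substance of the argument.
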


\begin{proof}
    This inequality follows immediately from the well-known Crouzeix
    inequality (sometimes called Crouzeix conjecture, since the bound
    is conjectured to hold with $2$ in place of $1 + \sqrt{2}$). See, 
    for instance, \cite{crouzeix2017numerical}. 
\end{proof}

A straightforward implication of the above result is that if
$p_m(z)$ is a degree $m-1$ approximant to $f(z)$ and 
$|p_m - f| \leq \epsilon_m$
on a domain containing $\mathcal W(Q)$, then 
$\norm{f_m - f(Q) v}_2$ can be bounded by
writing $f(z) = p_m(z) + r_m(z)$ and 
using the exactness property:
\begin{align*}
  \norm{f_m - f(Q) v}_2 &\leq 
    \norm{V_m p_m(H_m) e_1 - V_m r_m(H_m) e_1 - 
    p_m(Q) v + r_m(Q) v}_2 \\
    &\leq \norm{V_m r_m(H_m) e_1}_2 
    	 + \norm{r_m(Q) v}_2 \leq 2 (1 + \sqrt{2}) \cdot \epsilon_m . 
\end{align*}

We know, for instance by Weierstrass' theorem, that polynomials approximate
uniformly continuous functions on a compact set, so this alone guarantees
convergence of the scheme. However, it tells us very little
about the convergence speed. It turns out that, for many functions of 
practical use that have a high level of smoothness (such as $f(z) = e^{z}$), 
the convergence is fast. 

When the dimension of the space $\mathcal K_m(Q, v)$ increases, 
the orthogonalization inside the Arnoldi scheme can become
the dominant cost in the method. For this reason, it is advisable to employ
restarting techniques. These aim at stopping the iteration after $m$
becomes sufficiently large, and consider a partial approximation of the
function $f_m^{(1)}$. Then, the residual $f(Q) v - f_m^{(1)}$ is approximated
by restarting the Arnoldi scheme from scratch. The efficient and robust 
implementation of this scheme is non-trivial; we use the approach developed
in \cite{frommer2014efficient}, to which we refer the reader for further
details on the topic. 

\subsection{Approximation of exponential and $\varphi_1(z)$}

After running the Arnoldi scheme, we are left with a simpler problem: we need
to compute $f(H_m) e_1$, where $H_m$ is a small $m \times m$ matrix. In our 
case, we are interested in the functions
\[
  e^{tz} \qquad \text{and} \qquad
  t\varphi_1(tz) = \frac{e^{tz} - 1}{z}. 
\]
The literature on the efficient approximation of the matrix exponential
is vast; the most common approach for $e^{tz}$
is to use a Padé approximation scheme coupled 
with a scaling and squaring technique, which is the default method 
implemented by MATLAB through the function \texttt{expm}; see
the discussion in \cite{higham2008functions} for the optimal
choice of parameters for the scaling phase and the approximation rule. 
Then, one can consider the rational approximant to $e^z$ obtained 
using the Padé scheme with order $(d,d)$, let us call it $r(z)$, 
and so we have 
\[
  e^{H_m} \approx \left( r\left(\frac{1}{2^h} H_m\right) \right)^{2^{h}}. 
\]
The latter matrix power can be efficiently computed by $h$ steps of repeated squaring, 
and the evaluation of the rational function requires $\mathcal O(d)$ 
matrix multiplications and one inversion. The order $d$ has to be chosen
depending on the level of squaring (i.e., on the value of $h$), and 
is an integer between $6$ and $13$ (parameter tuning for
optimal performance and accuracy can be a tricky task, so we suggest
to either refer to \cite{higham2008functions} or to rely on the
MATLAB implementation of \texttt{expm}). 

Concerning the computation of $\varphi_1(z)$, we use a trick widely
used in exponential integrators. 
In particular, 
we propose to recast the problem as the computation of a matrix exponential, 
by exploiting the following known result from the framework of
exponential integrators, whose proof can be found in 
\cite[Theorem 2.1]{al2011computing}. 

\begin{theorem}
	Let $A$ be any $n \times n$ square matrix, and $v \in \mathbb{C}^{n}$. Then, 
	the following relation holds: 
	\[
	  \tilde A := \begin{bmatrix}
	    A & v \\
	    0_{1 \times n} & 0 \\
	  \end{bmatrix}, \qquad 
	  \begin{bmatrix}
	  I_n & 0_{n \times 1}
	  \end{bmatrix} e^{\tilde A}\ \begin{bmatrix}
	  v \\ 0
	  \end{bmatrix} = \varphi_1(A)\ v. 
	\]
\end{theorem}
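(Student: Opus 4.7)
The plan is to prove the block-exponential identity by computing the powers of $\tilde A$ explicitly and then summing the power series for $e^{\tilde A}$ block by block. This is essentially a bookkeeping argument, exploiting the fact that the bottom row of $\tilde A$ is zero, which makes the iteration particularly simple.

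First, I would establish by induction on $k \geq 1$ that
\[
  \tilde A^k = \begin{bmatrix} A^k & A^{k-1} v \\ 0_{1\times n} & 0 \end{bmatrix}.
\]
The base case $k=1$ is the definition of $\tilde A$. For the inductive step, one just multiplies the inductive hypothesis on the right by $\tilde A$; the bottom row stays zero, the $(1,1)$ block becomes $A^k \cdot A = A^{k+1}$, and the $(1,2)$ block becomes $A^k \cdot v + A^{k-1} v \cdot 0 = A^k v$, completing the induction.

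Next, I would sum the series $e^{\tilde A} = I_{n+1} + \sum_{k \geq 1} \tilde A^k / k!$. Using the formula above, the four blocks of $e^{\tilde A}$ can be identified separately. The $(1,1)$ block sums to $I_n + \sum_{k \geq 1} A^k / k! = e^A$, the $(2,2)$ block is $1$, the $(2,1)$ block is $0$, and the $(1,2)$ block is
\[
  \sum_{k \geq 1} \frac{A^{k-1} v}{k!} = \left( \sum_{j \geq 0} \frac{A^j}{(j+1)!} \right) v = \varphi_1(A)\, v,
\]
where the last equality uses the power-series definition of $\varphi_1$ (valid as an entire function, so no convergence issues arise). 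Thus
\[
  e^{\tilde A} = \begin{bmatrix} e^A & \varphi_1(A)\, v \\ 0 & 1 \end{bmatrix}.
\]

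Finally, the stated identity follows by extracting the appropriate block via multiplication with $\begin{bmatrix} I_n & 0 \end{bmatrix}$ on the left and the canonical vector selecting the last column on the right. I remark that there appears to be a typo in the statement as written: selecting the column via $\begin{bmatrix} v \\ 0 \end{bmatrix}$ yields $e^A v$ rather than $\varphi_1(A) v$, and the intended selector is $\begin{bmatrix} 0_{n \times 1} \\ 1 \end{bmatrix}$. With that correction, the desired equality is immediate from the block form of $e^{\tilde A}$. There is no real obstacle here; the only subtlety worth a sentence is noting that the convergence of the series for $\varphi_1(A)$ is unconditional since $\varphi_1$ is entire, so the termwise block decomposition of the exponential series is fully justified.
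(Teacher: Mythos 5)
Your proof is correct and complete. The paper itself does not prove this statement: it simply cites \cite[Theorem 2.1]{al2011computing}, where the result is established (in greater generality, for a block matrix carrying several vectors and producing the whole family $\varphi_1, \varphi_2, \ldots$ at once). Your argument --- induction giving $\tilde A^k = \left[\begin{smallmatrix} A^k & A^{k-1}v \\ 0 & 0\end{smallmatrix}\right]$ for $k \geq 1$, followed by termwise summation of the exponential series to obtain $e^{\tilde A} = \left[\begin{smallmatrix} e^A & \varphi_1(A)v \\ 0 & 1\end{smallmatrix}\right]$ --- is the standard elementary derivation of exactly this special case, and it is fully rigorous; the remark that $\varphi_1$ is entire, so the series manipulations are unconditional, is the right thing to say. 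You are also right to flag the typo: with the right factor $\left[\begin{smallmatrix} v \\ 0\end{smallmatrix}\right]$ as printed, the bilinear expression evaluates to $e^A v$, not $\varphi_1(A)v$; the intended selector is the last canonical basis vector $\left[\begin{smallmatrix} 0_{n\times 1} \\ 1\end{smallmatrix}\right]$, which picks out the $(1,2)$ block of $e^{\tilde A}$. This matches the formulation in the cited reference, where the $\varphi$-functions are read off from the last columns of the augmented exponential. In short: the paper outsources the proof to a citation, while you supply a self-contained and correct one, and in doing so you caught an error in the transcription of the statement.
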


The above result tells us that the action of $\varphi_1(A)$ on a vector $v$
can be obtained by computing the action of a slightly larger matrix $\tilde A$
on the vector $v$ padded with a final zero. Even when the norm of $A$ is large, 
the techniques in \cite{frommer2014efficient} allow to accurately control the
approximation error. 

\subsection{Incorporating restarting}

In practice, the dimension of the Krylov space needed to achieve
a satisfactory accuracy might be high, and therefore a more 
refined technique is needed to achieve a low computational 
cost. One of the most efficient techniques is to incorporate
a \emph{restarting scheme}: we stop the method after the dimension
of the space reaches a certain maximum allowed dimension, 
obtaining an approximation $f_1(Q) v$ of low-quality. Then, 
we restart the method to approximate $(f - f_1)(Q) v$, i.e., 
the residual. The procedure is then
repeated until convergence. 

An efficient implementation of such scheme
is far from being trivial, and we rely on the restarting
scheme proposed in \cite{frommer2014efficient}, to which
we refer for further details. 
Our implementation relies on the \texttt{funm\_quad} package
that is provided accompanying the paper \cite{frommer2014efficient}. 

\section{Sensitivity analysis}
\label{sec:sensitivity}

Performance and dependability models are often parametric, in the sense that some transition rates $Q_{ij}$ can be functions of some parameter $\lambda$. As described in~\cite{RST89}, the sensitivity analysis is the study of how the measures of interest vary at changing $p$. 

\subsection{A motivation for sensitivity analysis}

During the design phase, a key requirement is to
isolate the set of parameters that most influence the behavior of 
the system. This can guide optimization to the design. In particular, 
this allows to investigate the return of an investment aimed at 
changing some components, in terms of enhanced reliability and/or
availability. When the budget for developing a new product is 
limited, this is of paramount importance. 

Moreover, real world parameters come from actual (physical) measurements and 
therefore might be affected by measure errors of different orders of magnitude,
in particular for cyber-physical systems.
Sensitivity analysis can guide the effort in collecting the most relevant parameters with high precision and the other parameters with acceptable precision.

Another setting where sensitivity analysis plays a relevant role is 
the modeling of complex systems, where certain aspects of the system behavior are often
abstracted away because the time scale at which they appear is considered too fine (avoiding stiffness) or in order to maintain a reasonable level of complexity within the model itself (state space explosion avoidance).
In particular, a hierarchical modeling strategy~\cite{T01} may be adopted: specific system components are modeled in isolation, measures are defined on them and the numerical value obtained evaluating the measures are used as parameters for the overall system model.
The hierarchical strategy can be employed whenever the system logical structure presents a (partial) order among components and can involve several layers.
Establishing to which extent each layer is sensitive to those parameters that come from an underlying layer enhances and guides modeling choices. 

\subsection{Sensibility analysis and Frech\'et derivatives}

We are interested in bounding the first order expansion of a measure
$g(v,w)$ when the infinitesimal generator changes along a certain direction. 
Let $g_{p}(v, w)$ be a performability measure of a system with matrix $Q$ depending
on a parameter $p$ in a smooth way. We want to determine a real 
positive number $M$ such that: 
\[
  |g_{p}(v, w) - g_{p_0}(v,w)| \leq M \cdot |p - p_0| + \mathcal O(|p - p_0|^2). 
\]
This characterizes the amplification of the changes in the system behavior
when the parameter $p$ changes. If $Q$ depends smoothly on $p$ then we can
expand it around $p_0$: 
\[
  Q(p) = Q(p_0) + (p - p_0) \cdot \frac{\partial}{\partial p} Q(p_0) + R(p), \qquad 
  \norm{R(p)} \leq \mathcal O(|p - p_0|^2). 
\]
From now on, by a slight abuse of notation, we will write $O(|p-p_0|^2)$ in place
of $R(p)$, meaning that the bound is correct up to the second order terms in norm.
A straightforward computation yields the following result. 

\begin{lemma}
    Let $Q(p)$ a matrix with a $C^1$ dependency on $p$ around a point $p_0$, and let 
    $g_p(v, w) = v^T f(Q(p)) w$. Then, we have 
    \[
      |g_{p}(v, w) - g_{p_0}(v,w)| \leq \left| v^T D_f(Q(p_0))
        \left[\frac{\partial Q(p_0)}{\partial p}\right] w \right| + 
        \mathcal O(|p - p_0|^2), 
    \]
    where $D_f(\cdot)$ is the Frech\'et derivative of the matrix function $f(\cdot)$. 
\end{lemma}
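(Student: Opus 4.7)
The plan is to combine a first-order Taylor expansion of $Q(p)$ with the defining property of the Frech\'et derivative. Recall that $D_f(A)[E]$ is the unique bounded linear operator in $E$ satisfying $f(A+E) = f(A) + D_f(A)[E] + o(\lVert E \rVert)$ as $\lVert E\rVert \to 0$.

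First I would invoke the $C^1$ hypothesis on $Q$ to write
\[
  Q(p) = Q(p_0) + (p - p_0)\,\frac{\partial Q(p_0)}{\partial p} + R(p), \qquad \lVert R(p)\rVert = \mathcal O(|p-p_0|^2),
\]
and set $E := Q(p) - Q(p_0)$, so that $\lVert E\rVert = \mathcal O(|p-p_0|)$. Substituting into the Frech\'et expansion and using linearity of $D_f(Q(p_0))[\,\cdot\,]$ in its direction argument, I obtain
\[
  f(Q(p)) = f(Q(p_0)) + (p-p_0)\, D_f(Q(p_0))\left[\frac{\partial Q(p_0)}{\partial p}\right] + D_f(Q(p_0))[R(p)] + o(\lVert E\rVert).
\]
The third summand is $\mathcal O(|p-p_0|^2)$ because $D_f(Q(p_0))$ is a bounded linear operator applied to a direction of norm $\mathcal O(|p-p_0|^2)$. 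The $o(\lVert E\rVert)$ remainder can be sharpened to $\mathcal O(\lVert E\rVert^2) = \mathcal O(|p-p_0|^2)$ since the functions of interest ($e^{tz}$ and $t\varphi_1(tz)$) are entire, so the second Frech\'et derivative $D^{(2)}_f$ is uniformly bounded on a neighborhood of $Q(p_0)$.

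To finish I would left-multiply by $v^T$, right-multiply by $w$, subtract $g_{p_0}(v,w)$, take absolute values and apply the triangle inequality; the linear-in-$(p-p_0)$ term matches the expression in the statement via the linearity of $D_f$ in its direction argument, while every other contribution collapses into $\mathcal O(|p-p_0|^2)$. The only delicate point I anticipate is the upgrade of the Frech\'et remainder from $o(\lVert E\rVert)$ to $\mathcal O(\lVert E\rVert^2)$; this is standard once one writes the remainder in integral form and uses local boundedness of the second derivative on a neighborhood containing the spectrum of $Q(p_0)$, but it is the only genuinely analytic input — everything else is bookkeeping around the definition of $D_f$.
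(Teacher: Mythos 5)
Your argument is exactly the ``straightforward computation'' the paper alludes to but does not write out: Taylor-expand $Q(p)$, insert the increment into the defining expansion of the Fr\'echet derivative, use linearity and boundedness of $D_f(Q(p_0))[\,\cdot\,]$, and sandwich everything else into the second-order remainder. You also correctly identify the one genuine subtlety (upgrading the Fr\'echet remainder from $o(\norm{E})$ to $\mathcal O(\norm{E}^2)$ via local boundedness of the second derivative, which holds for the entire functions used here), and your derivation naturally produces the factor $|p-p_0|$ multiplying the Fr\'echet-derivative term that the paper's displayed inequality evidently intends (cf.\ its definition of the constant $M$) but omits.
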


Even more interestingly, there exists a simple strategy (presented, for example, in 
\cite{higham2008functions}) to compute the Frech\'et derivative
along a certain direction by making use of block matrices. 
Specializing it to our case yields the following corollary. 
\begin{corollary} \label{cor:sensitivity-block}
    Let $Q(t)$ a matrix with a $C^1$ dependency on $p$ around a point $p_0$, and let 
    $g_p(v, w) = v^T f(Q(p)) w$. Then, we have 
    \[
      |g_{p}(v, w) - g_{p_0}(v,w)| \leq \begin{bmatrix}
        v^T & 0^T  \\
      \end{bmatrix} f\left( \begin{bmatrix}
       Q(p_0) & \frac{\partial Q(p_0)}{\partial p} \\
       & Q(p_0)\\
      \end{bmatrix} \right) \begin{bmatrix}
      0 \\
      w \\
      \end{bmatrix} + \mathcal O(|p - p_0|^2), 
    \]
    where the vectors are partitioned accordingly to the $2 \times 2$ block
    matrix. 
\end{corollary}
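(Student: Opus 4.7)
The plan is to reduce the corollary to the preceding lemma by substituting in a classical block-matrix identity for Fr\'echet derivatives, often attributed to Mathias. Concretely, by the lemma immediately above, the first-order expansion of $g_p(v,w)$ around $p_0$ is already controlled by
\[
|g_{p}(v,w) - g_{p_0}(v,w)| \leq \left| v^T D_f(Q(p_0))\!\left[ \tfrac{\partial Q(p_0)}{\partial p}\right] w \right| + \mathcal O(|p-p_0|^2),
\]
so the only thing left to verify is that the scalar $v^T D_f(Q(p_0))[\partial Q/\partial p]\, w$ coincides with the bilinear expression on the right-hand side of the corollary.

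The identity I would invoke (see, e.g., \cite{higham2008functions}) is that for any square matrix $A$ and any matrix $E$ of the same size,
\[
f\!\left( \begin{bmatrix} A & E \\ 0 & A \end{bmatrix} \right) = \begin{bmatrix} f(A) & D_f(A)[E] \\ 0 & f(A) \end{bmatrix}.
\]
I would sketch why this is true: if $f(z)=z^k$ is a monomial, then a direct induction on $k$ using the block-triangular product rule shows that the $(1,2)$ block of the $k$-th power equals $\sum_{j=0}^{k-1} A^j E A^{k-1-j}$, which is precisely the Fr\'echet derivative of $z^k$ at $A$ in the direction $E$. Linearity extends this to polynomials, and density/continuity (on a neighborhood of the spectrum of $A$ in the sense needed for the matrix function) extends it to the class of $f$ considered here. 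Both $f(z)=e^{tz}$ and $f(z)=t\varphi_1(tz)$ used in this paper are entire, so no regularity obstacle arises.

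With this identity in hand, the last step is a purely algebraic projection onto the $(1,2)$ block: left-multiplying by the row vector $[\,v^T\ \ 0^T\,]$ and right-multiplying by the column vector $[\,0\ ;\ w\,]$ kills the diagonal blocks and extracts exactly $v^T D_f(A)[E] w$. Setting $A = Q(p_0)$ and $E = \partial Q(p_0)/\partial p$ then gives precisely the expression in the statement, and combining with the preceding lemma completes the argument.

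The only mildly subtle step is the Mathias identity itself; the rest is a two-line reduction. Since the identity is standard and is explicitly referenced in \cite{higham2008functions}, I would cite it rather than re-derive it in detail, and devote the written proof essentially to (i) recalling the lemma, (ii) quoting the block-matrix formula for $D_f$, and (iii) performing the block projection.
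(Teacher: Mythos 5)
Your proposal is correct and follows essentially the same route as the paper: the paper's proof is a one-line appeal to \cite[Theorem 4.12]{higham2008functions}, which is precisely the Mathias block-matrix identity you quote, combined implicitly with the preceding lemma and the block projection you describe. You have merely made explicit the steps the paper leaves to the reader.
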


\begin{proof}
    The result is an immediate consequence of \cite[Theorem 4.12]{higham2008functions}. 
\end{proof}

In view of the above result, if we are given an efficient
method to evaluate $v^T f(Q) w$, the sensitivity with respect to a certain
perturbation of parameters can be computed by extending the method to work
on a matrix of double the dimension. 

For dense linear algebra methods, which have a cubic complexity, this amounts to $8$ times the cost of 
just computing $g(v, w)$; for methods based on quadrature or 
Krylov subspaces, which have a 
linear complexity in the dimension, this means twice the cost of 
an evaluation. 

In both cases, the asymptotic cost for the computation does not increase. 

\section{Numerical tests}
\label{sec:numerical}

In this section we report some practical example of the computation
of availability and performance measures relying on matrix functions. 

The results can be replicated using the MATLAB code that we have
published at \url{https://github.com/numpi/markov-measures}, by running
the scripts \texttt{Example1.m}, \ldots, \texttt{Example4.m}. 
The numbering of the examples coincides with the one of the following subsections. 
The parameters controlling 
the truncation both in the commercial solver employed and 
in \texttt{funm\_quad} are set to $10^{-8}$. For the \texttt{funm\_quad} 
package we have used a restart every $15$ iterations and a maximum
number of restarts equal to $10$, which has never been reached 
in the experiments. 

For instantaneous measures, our tests rely directly on the integral
representation of the matrix exponential in \texttt{funm\_quad}. 
This method is denoted by \texttt{quad\_exp} in tables and figures. 
For cumulative measures, involving the evaluation of $\varphi_1(z)$, 
the methods based on rephrasing the problem as the action of a 
matrix exponential is identified by \texttt{exp\_phi}. 

The tests have been performed
with MATLAB r2017b running on Ubuntu 17.10
on a computer with an Intel i7-4710MQ CPU running
at 2.50 GHz, and with 16 GB of RAM clocked at 1333 MHz. 

\subsection{Average queue length} \label{sec:example1}

We consider a simple Markov chain that models a queue for some service. 
The process $X(t)$ has as possible states the integers 
$\{ 0, \ldots, n - 1\}$, which represent the number of clients in
the queue. A pictorial representation of the states
for $n = 9$ is given in Figure~\ref{fig:markov-1}.

\begin{figure}
	\centering
\begin{tikzpicture}
	\foreach \x in {0, ..., 8} {
		\draw (\x, 0) circle (.3);
		\node at (\x, 0) {$\x$};
	}
	\foreach \x in {1, ..., 8} {
		\draw[->] plot[smooth] coordinates{(\x-.8,.3) (\x-.6,.4) (\x-.4,.4) (\x-.2,.3)};
		\node at (\x-.5,.7) {$\rho_1$};
		\draw[->] plot[smooth] coordinates{(\x-.2,-.3) (\x-.4,-.4) (\x-.6,-.4) (\x-.8,-.3)};
		\node at (\x-.5,-.7) {$\rho_2$};
	}
\end{tikzpicture}
\caption{Pictorial representation of the Markov chain modeling a queue
	for a service, with $n = 9$ states. The rates of the probabilities
	of jumping between the states are reported on the edges.}
\label{fig:markov-1}
\end{figure}
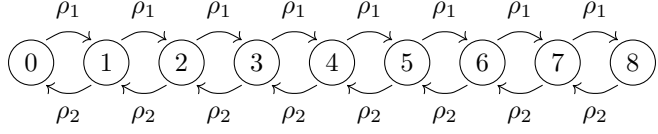

At any state, the rate of probability of jumping 
``left'' (i.e., to serve one client) is equal to $\rho_1$, whereas
the rate of probability at which a new client arrives is equal to
$\rho_2$. The corresponding matrix $Q$ for the Markov chain is 
as follows: 
\[
  Q = \begin{bmatrix}
    -\rho_{2} & \rho_2 \\
    \rho_1    & -(\rho_1 + \rho_2) & \rho_2 \\
    & \ddots & \ddots & \ddots \\
    & & \rho_1 & -(\rho_1 + \rho_2) & \rho_2 \\
    & & & \rho_1 & -\rho_1 \\
  \end{bmatrix} \in \mathbb{C}^{n \times n}. 
\]
According to Section~\ref{sec:clients}, this measure
can be expressed in the form 
\[
  \rinst(t) = \pi_0^T e^{tQ} r, \qquad 
  r = \begin{bmatrix}
    0 \\
    1 \\
    \vdots \\
    n - 1 \\
  \end{bmatrix}. 
\]
The reward vector $r$ gives to each state a weight proportional to
the number of clients waiting in the queue. 
We assume the initial state $\pi_0$ to be the vector $e_1$, corresponding
to starting with an empty queue. The measure gives the average 
expected number of waiting clients at time $t$. 

We have tested our implementation based on the quadrature scheme
described in \cite{afanasjew2008implementation}, and 
the timings needed to compute the
measures as a function of the number of slots in the queue (that is, the
size of the matrix $Q$) are reported in Figure~\ref{fig:clients}. 

The proposed approach has a linear complexity growth as the number $n$ 
increases, as expected.

\begin{figure}
	\centering
	\begin{minipage}{.6\linewidth}
	\begin{tikzpicture}
	    \begin{loglogaxis}[legend pos = north west, 
	      xlabel = \# of slots ($n$), ylabel = Time (s), 
	      width = .9\linewidth, height = 6.5cm]
	     \addplot table {clients.dat};
	     \addplot table[y index = 2] {clients_moeb.dat};
	     \addplot[domain = 1024 : 1000000, dashed] {1e-5 * x};
	     \legend{\texttt{quad\_exp}, M\"obius, $\mathcal O(n)$}
	    \end{loglogaxis}
	\end{tikzpicture}
	\end{minipage}~\begin{minipage}{.35\linewidth}
	  \centering
	  \pgfplotstabletypeset[
	    columns/0/.style = {column name = $n$}, 
	    columns/1/.style = {column name = Time (s)},
	    columns/2/.style = {column name = M\"obius}
	  ]{clients.dat}
	\end{minipage}
	
    \caption{Time needed to approximate the average number of clients at time 
    	$1$ depending on the total number of slots. The sizes range from 
    	$2^{10}$ to $2^{20}$.}
    \label{fig:clients}
\end{figure}
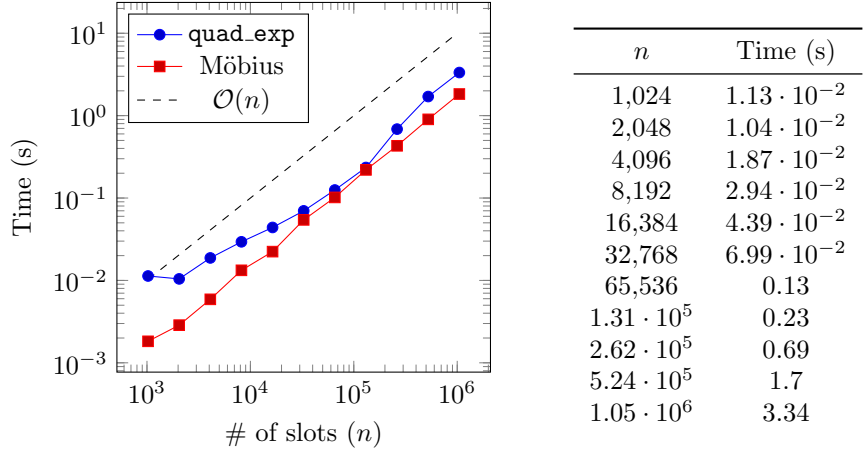

The accuracy requested was set to $10^{-8}$. We note
that, in this example, changing the number of available
slots does not alter this measure in a distinguishable way: the states
with a large index are very unlikely to be reached in a single
unit of time, and therefore have a very low influence
on the distribution $\pi(t)$ with $t = 1$. 

\subsection{Availability modeling for a telecommunication system}
\label{sec:example2}

We consider an example taken from \cite{TB17}[Example 9.15], which describes
a telecommunication switching system with fault detection / reconfiguration
delay. This model describes $n$ components which may fail independently, 
with a mean time to failure of $\frac{1}{\gamma}$. 
After failure of one component, this situation is detected and the entire system switches to \emph{detected mode}. If this happens, 
the component is repaired with a
certain probability $c$ (coverage factor), expected time of $\frac{1}{\delta}$, and the system came back to \emph{normal mode}; 
otherwise, with probability $1 - c$, the component remains failed, the system switches to \emph{normal mode} anyway, and the failed component is repaired with expected time $\frac{1}{\tau}$. 
The pictorial description of the system is reported in Figure~\ref{fig:trivedi}, and the nonzero
structure of the infinitesimal generator $Q$ is reported in Figure~\ref{fig:spyQexample2}.
\begin{figure}
	\centering 
	\includegraphics[width=.9\linewidth]{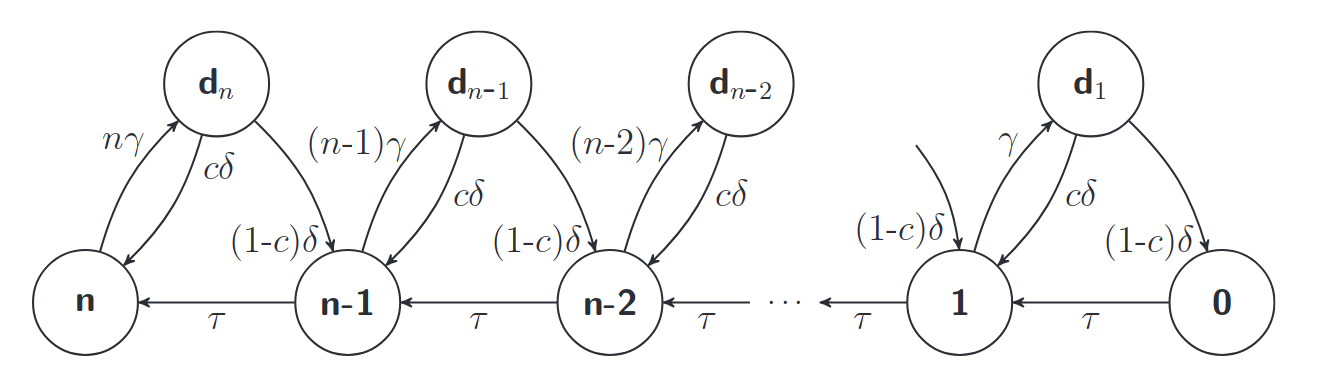}
	
    \caption{Pictorial description of the telecommunication switching 
    	system with fault detection / reconfiguration, modeled through
    	a Markov chain.
	    Figure taken from~\cite{TB17}}
    \label{fig:trivedi}
\end{figure}
\begin{figure}
	\centering
	\includegraphics[width=0.8\linewidth]{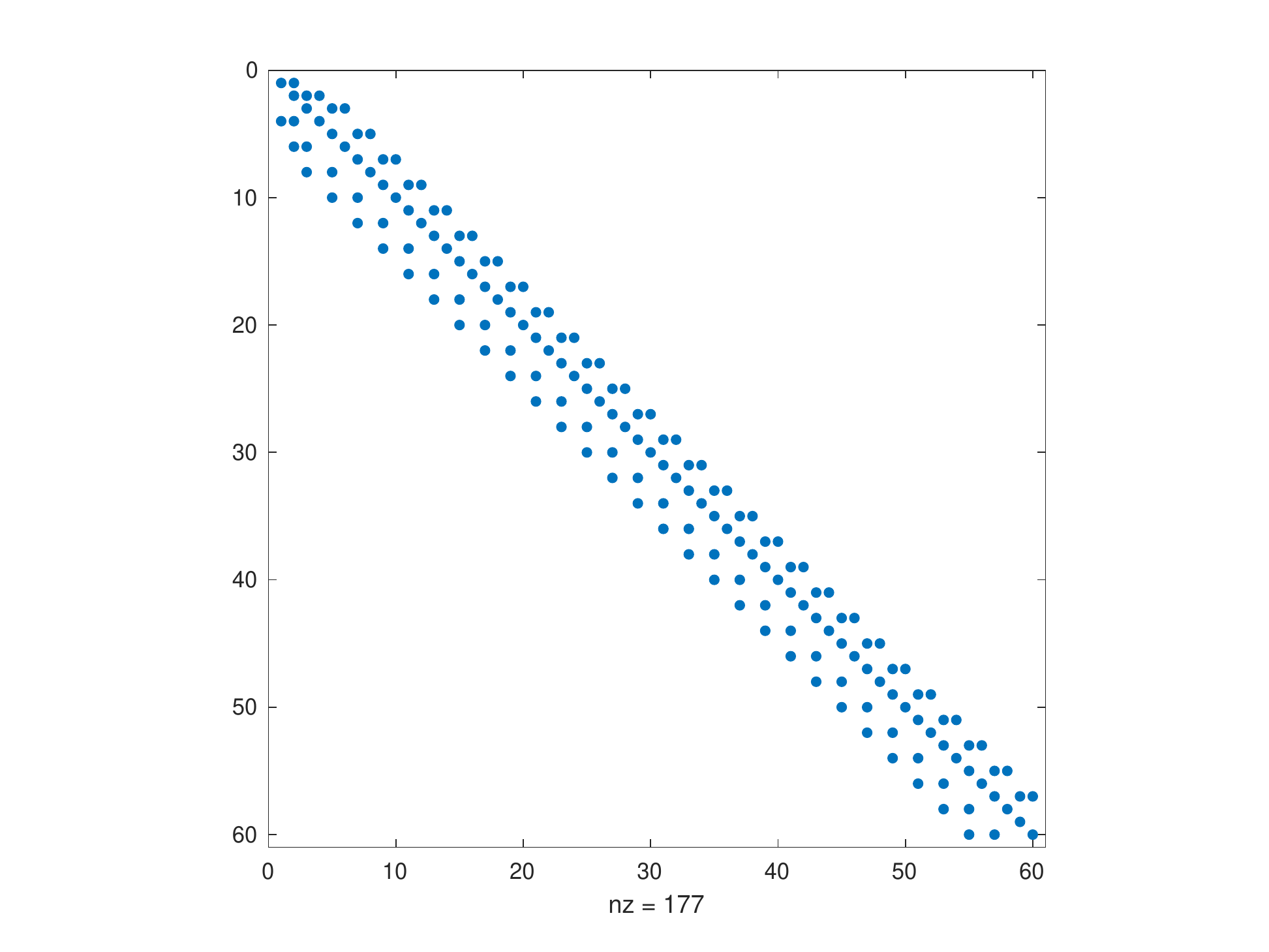}
	\caption{\label{fig:spyQexample2} Non-zero structure of leading $60 \times 60$ minor of the infinitesimal generator $Q$ for the system described
		in Section~\ref{sec:example2}. The matrix $Q$ is banded, and the structure is
	    repeated along the diagonal.}
\end{figure}
This Markov chain is irreducible, and we compute, fixed the interval $[0,t]$, 
the average time the system spends in detected mode from time $0$ to time $t$.

Denoting with $\mathbf d$ the set of detected states, labeled as $d_i$ for $i=1,\dots,n$ in Figure~\ref{fig:trivedi}, this measure, rephrasing what already seen in~Section~\ref{sec:uptime}, can be computed as

\[
D(t) = \int_{0}^t \mathbb E\left[ r_{X(\tau)} \right]\ d\tau , \qquad 
r = \mathbbm{1}_{\mathbf d}. 
\]

In our test, we consider the interval of time $[0, 20]$, i.e., 
$t = 20$. 
The parameters are chosen as follows 
\[
  c = 0.2, \qquad \delta = 0.5, \qquad \gamma = 0.95, \qquad \tau = 1.0.
\]
In order to assess the scalability
of our approach when the number of states grows, we consider
large values of $n$ (even though those may not be common for the
particular situation of a telecommunication system). The number of states
in the Markov chain can be shown to be $2n+1$. The computational
time required to compute the two measures is reported in Figure~\ref{fig:trivedi-times} for different values of $n$ ranging between
$2^{10}$ and $2^{15}$. We compare the timings with the
cumulative solver included in M\"obius 2.5~\cite{DCCDDDSW02} that implement the uniformization method. 

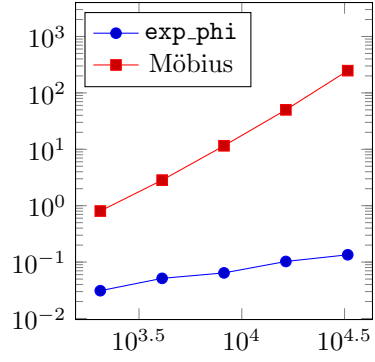
\begin{figure}
	\begin{minipage}{.48\linewidth}
		\centering
		Instantaneous availability \\[5pt]
		\pgfplotstabletypeset[columns={0,2,1},
		columns/0/.style = {column name = $n$},
		columns/2/.style = {column name = M\"obius},
		columns/1/.style = {column name = \texttt{exp\_phi}}
		]{example2.dat}
	\end{minipage}~\begin{minipage}{.48\linewidth}
	\begin{tikzpicture}
	\begin{loglogaxis}[width=.95\linewidth, 
	legend pos = north west, ymax = 4e3, height = .3\textheight]
	\addplot table[x index = 0, y index = 1] {example2.dat};
	\addplot table[x index = 0, y index = 2] {example2.dat};
	\legend{\texttt{exp\_phi}, M\"obius}
	\end{loglogaxis}
	\end{tikzpicture}
\end{minipage}	\caption{Timings for the computation of the cumulative measure in the telecommunication system, described
in Section~\ref{sec:example2}.}
\label{fig:trivedi-times}
\end{figure}

The timings shows that, in this case, the computational complexity
on the solver bundled with M{\" o}bius seems to have a quadratic complexity
in the number of states. This appears to be caused by an increasing
number of iteration needed to reach convergence due to relevant differences among rates in the Markov chain, which
causes stiffness in the underlying ODE. The Krylov approach, 
on the other hand, does not suffer this drawback. 

\subsection{Reliability model for communication system attacks}
\label{sec:example3}

\begin{figure}
	\label{fig:example3}
	\centering{
		\begin{tikzpicture}[node distance=6em]
		\tikzstyle{place}=[circle,thick,draw=black!75,
		minimum size=1.5em]
		\tikzstyle{red place}=[place,draw=red!75,fill=red!20]
		\tikzstyle{transition}=[rectangle,thick,draw=black!75,
		minimum height=2.5em]
		\tikzstyle{flatTransition}=[rectangle,thick,draw=black!75,
		minimum height=.5em,minimum width=2.5em]
		
		\node [place,tokens=0, label=above:$N_G$] (NG) {};
		\node [draw=none] (initialMarking) at (NG) {$N$};
		\node [transition, label=above:$T_{GB}$, label=below:\textcolor{red}{$N_G\lambda_c$}] 
		(TGB) [right of=NG] {};
		\node [place,tokens=0, label=above:$N_B$] (NB) [right of=TGB] {};
		\node [transition, label=above:$T_{BF}$, label=below:\textcolor{red}{$N_Bp_a\lambda_f$}] 
		(TBF) [right of=NB] {};
		\node [place,tokens=0, label=above:$N_F$] (NF) [right of=TBF] {};
		\node[draw=none] (action1) at ($.5*(TBF)+.5*(NF)+(0,1em)$) {\textcolor{blue}{$N_G=0$}};
		\node[draw=none] (action2) at ($.5*(TBF)+.5*(NF)+(0,-1em)$) {\textcolor{blue}{$N_B=0$}};
		\node [flatTransition, label=left:$T_{BE}$, label=right:\textcolor{red}{$N_B\frac{1-P_{\text{fn}}}{T_{\text{IDS}}}$}] 
		(TBE) [below of=NB] {};
		\node [place,tokens=0, label=right:$N_E$] (NE) [below of=TBE] {};
		\node [transition, label=above:$T_{GE}$, label=below:\textcolor{red}{$N_G\frac{P_{\text{fp}}}{T_{\text{IDS}}}$}] 
		(TGE) [left of=NE] {};
		
		\draw [->] (NG) to (TGB);
		\draw [->] (TGB) to (NB);
		\draw [->] (NB) to (TBF);
		\draw [->] (TBF) to (NF);
		\draw [->] (NB) to (TBE);
		\draw [->] (TBE) to (NE);
		\draw [->] (TGE) to (NE);
		\draw [->] (NG) -- (NG|-TGE) -- (TGE);
		\end{tikzpicture}
	}
	\caption{Attack model for the cyber-physical communication system described in Section~\ref{sec:example3}. This model is a simplified version of the model presented in~\cite{MTC17}. Places are represented as circles, transitions are represented as rectangles. Place and transition names are in black, transition rates are in red and actions performed whenever transition $T_{BF}$ completes are in blue.}
\end{figure}
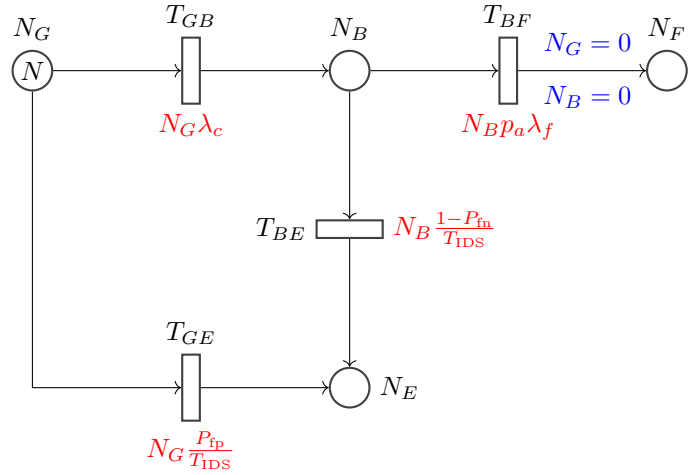

We consider the mobile cyber-physical system model presented in~\cite{MTC17}, describing a collection of communicating nodes which are subject to attacks.
The original study is based on a real-world architecture: there are $N$ mobile nodes, each node using sensors for localization and measuring anomaly phenomena, and the system comprises an imperfect intrusion/detection functionality distributed
to all nodes for dealing with both intrusion and fault tolerance.
This mechanism is based on a voting system.
Here a simplified version is discussed, we refer
the reader to \cite{MTC17} for further details on the intrusion/detection functionality and the complete system description.

The model considers a node capture
which involves taking control of a \emph{good} node by 
deceiving the authentication and turning it
into a \emph{bad} node that
will be able to generate attacks within the system. 
The attackers primary objective is to cause impairment failure by performing
persistent, random, or insidious attacks.
At each instant of time, the number of good and bad nodes are indicated as $N_G$ and $N_B$, respectively, and $N_E$ is the number of \emph{evicted} nodes, i.e., nodes that have been detected as bad ones by the intrusion/detection mechanism.
At the beginning, all nodes are considered good, i.e., $N_G=N$.
Only bad nodes can perform internal attacks, and whenever one of this attacks have success
the entire system fails, switching the value of $N_F$ from $0$, \emph{ok}, to $1$, \emph{failed}.

The model is expressed through the definition of the Stochastic Reward Net depicted in Figure~\ref{fig:example3}, where places (circles) correspond to $N_G,N_B,N_F,N_E$,
and determine the state of the system, and transitions (rectangles) define the behaviour
of the attach model
\begin{itemize}
	\item transition of a node from good to bad, called $T_{GB}$, represents the capture
	of a node by an attacker. 
	The capture of a single node take place with rate $\lambda_c$, thus, being the capture
	of a node independent from the capture of other nodes, the rate of transition $T_{GB}$
	is $N_G\lambda_c$. 
	\item transition of a node from bad to evicted, called $T_{BE}$, represents 
	the correct detection of an attack. 
	Calling $P_{\text{fn}}$ the probability of intrusion/detection false negative,
	and $T_{\text{IDS}}$ the period at which the intrusion/detection mechanism is exercised,
	the rate of $T_{BE}$ is $N_B\frac{1-P_{\text{fn}}}{T_{\text{IDS}}}$.
	\item transition of a node from good to evicted, called $T_{GE}$, represent a false positive
	of the intrusion/detection mechanism.
	Calling $P_{\text{fp}}$ the probability of intrusion/detection false positive,
	the rate of $T_{GE}$ is $N_G\frac{P_{\text{fp}}}{T_{\text{IDS}}}$.
	\item transition of a the entire system from ok to failed, called $T_{BF}$. 
	When a node is captured it will perform attacks with a probability $p_a$ and
	the success of attacks from $N_B$ compromised nodes has rate $\lambda_f$, thus
	the rate of $T_{BF}$ is $N_Bp_a\lambda_f$.
	At completion of transition $T_{BF}$ the entire system fails and then both $N_G$ and $N_B$
	are set to $0$ so that the Stochastic Reward Net reach a (failed) absorbing state.
\end{itemize} 
The graph whose vertexes are all the feasible combinations of values within places and arcs correspond to transitions forms the Markov chain under analysis.
For instance, with $N=3$ The Stochastic Reward Net of Figure~\ref{fig:example3} produces the Markov chain depicted in Figure~\ref{fig:SS}, where the notation $(n_G,n_B,n_E,n_F)$ means
$N_G=n_G, N_B=n_B, N_E=n_E$ and $N_F=n_F$. The nonzero structure of the infinitesimal
generator $Q$ is reported in Figure~\ref{fig:spyQexample3}.
\begin{figure}
	\label{fig:SS}
	\centering {
		\tiny
		\begin{tikzpicture}[scale=1.0]
		\tikzset{absorbingState/.style={circle,draw,fill=black!10}}
		\tikzset{votingState/.style={circle,draw,fill=blue!10}}
			\node[draw=none] (c) at (0,0) {};
			\node[votingState] (1) at ($(c) + (6em,-6em)$) {$3,0,0,0$}; 
			\node[votingState] (2) at ($(c)+(-3em,-6em)$) {$2,1,0,0$}; 
			\node[votingState] (3) at ($(c)+(3em,-12em)$) {$2,0,1,0$}; 
			\node[state] (4) at ($(c)+(-6em,-12em)$) {$1,2,0,0$}; 
			\node[absorbingState] (5) at ($(c)+(-11em,-24em)$) {$0,0,0,1$}; 
			\node[state] (6) at ($(c)+(3em,-18em)$) {$1,1,1,0$}; 
			\node[state] (7) at ($(c)+(8em,-24em)$) {$1,0,2,0$}; 
			\node[state] (8) at ($(c)+(-6em,-18em)$) {$0,3,0,0$}; 
			\node[state] (9) at ($(c)+(-3em,-24em)$) {$0,2,1,0$}; 
			\node[absorbingState] (10) at ($(c)+(-4em,-30em)$) {$0,0,1,1$}; 
			\node[state] (11) at ($(c)+(4em,-30em)$) {$0,1,2,0$}; 
			\node[absorbingState] (12) at ($(c)+(14em,-20em)$) {$0,0,3,0$}; 
			\node[absorbingState] (13) at ($(c)+(14em,-30em)$) {$0,0,2,1$}; 
			\draw[every loop]
				(1) edge[bend right] node {$T_{GB}$} (2)
				(1) edge[bend left] node {$T_{GE}$} (3)
				(2) edge[bend right] node {$T_{BF}$} (5)
				(2) edge[bend left] node {$T_{GB}$} (4)
				(2) edge[bend left] node {$T_{GE}$} (9) 
				(2) edge[bend left] node {$T_{BE}$} (3)
				(3) edge node {$T_{GB}$} (6)
				(3) edge[bend left] node {$T_{GE}$} (7)
				(4) edge[bend right] node {$T_{BF}$} (5)
				(4) edge node {$T_{GB}$} (8)
				(4) edge[bend right] node {$T_{BE}$} (6)
				(6) edge[bend left] node {$T_{GB}$} (9)
				(6) edge[bend left] node {$T_{BF}$} (10) 
				(6) edge node[bend right] {$T_{GE}$} (11) 
				(6) edge[bend left] node {$T_{BE}$} (7) 
				(7) edge[bend left] node {$T_{GB}$} (11) 
				(7) edge[bend left] node {$T_{GE}$} (12) 
				(8) edge[bend right] node {$T_{BF}$} (5) 
				(8) edge[bend left] node {$T_{BE}$} (9) 
				(9) edge[bend right] node {$T_{BF}$} (10) 
				(9) edge node {$T_{BE}$} (11) 
				(11) edge[bend right] node {$T_{BE}$} (12) 
				(11) edge node {$T_{BF}$} (13);
		\end{tikzpicture}
	}
	\caption{Markov chain produced by the Stochastic Reward Net depicted in Figure~\ref{fig:example3}. The initial state is $(3,0,0,0)$, 
	the absorbing states are colored in gray, states such that $N_G\ge 2 N_B$ are colored in blue.} 
\end{figure}
\begin{figure}
	\centering
	\includegraphics[width=0.8\linewidth]{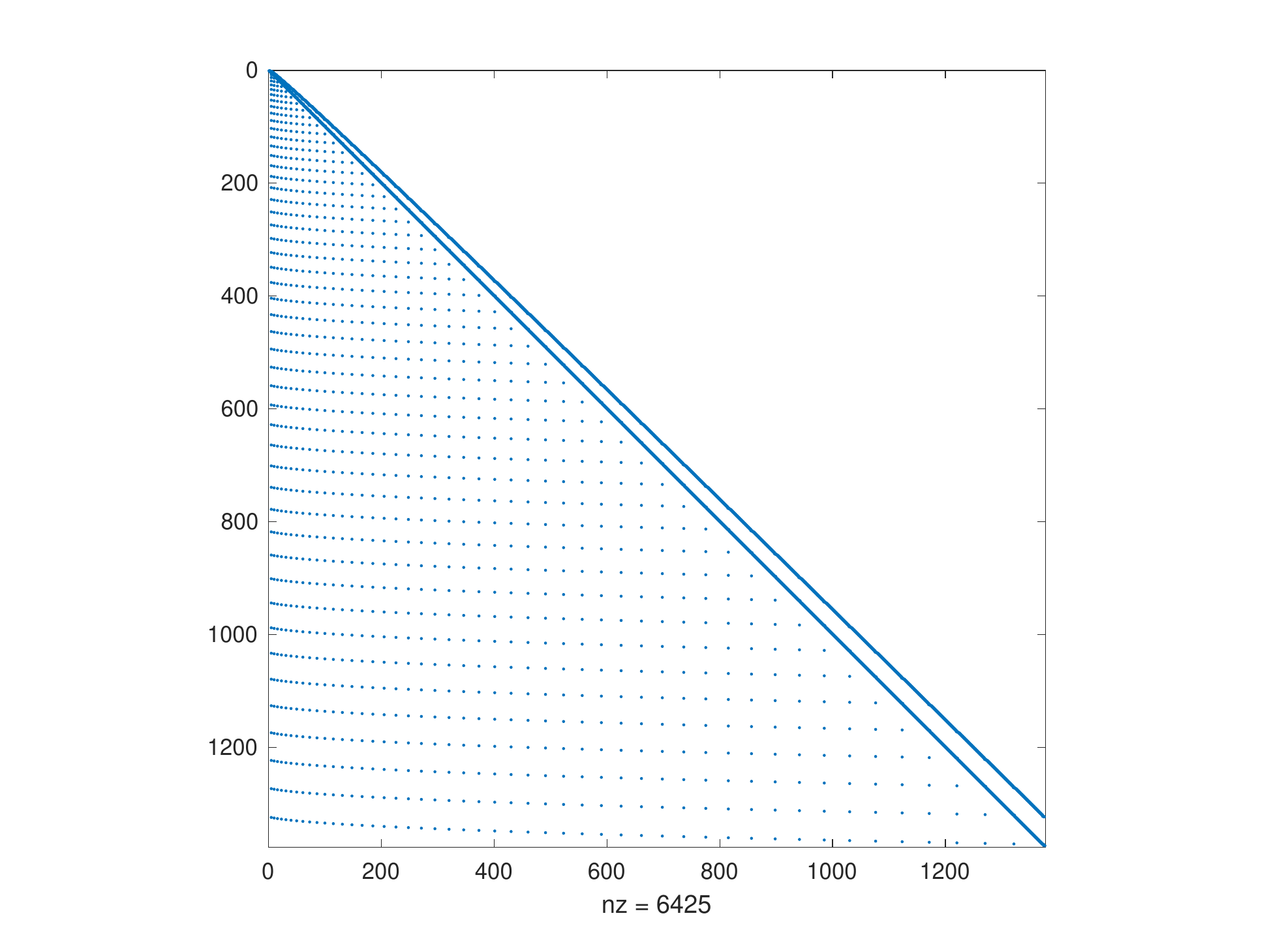}
	\caption{\label{fig:spyQexample3} Nonzero structure of the infinitesimal generator $Q$ for the system described
		in Section~\ref{sec:example3}, where $n=1376$.}
\end{figure}
The parameters are chosen as follows 
\[
p_a = 0.7,\qquad P_{\text{fn}} = P_{\text{fp}} = 0.1, \qquad T_{\text{IDS}} = 15.0, \qquad \lambda_c = 0.1, \qquad \lambda_f = 0.2.
\]

Being the intrusion/detection mechanism based on a voting system, the Byzantine fault model is
selected to define the \emph{security failure} of the system, i.e., the situation in which the system is working but there are not enough good nodes to obtain consensus when voting, that in our system means $N_G < 2 N_B$.
Thus, the cumulative measure of interest is
\[
B_{\text{security}}=\int_{0}^t \mathbb E\left[ r_{X(\tau)}\right]\ d\tau , \qquad 
r = \mathbbm{1}_{\{N_G\ge 2 N_B,N_F=0\}}.
\]

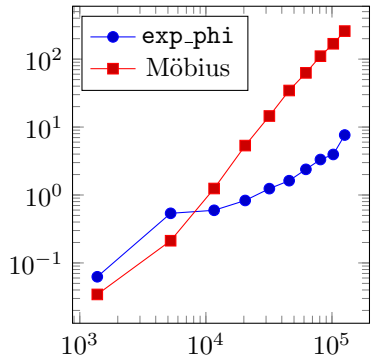
\begin{figure}
	\begin{minipage}{.48\linewidth}
		\centering
		Instantaneous availability \\[5pt]
		\pgfplotstabletypeset[columns={0,2,1},
		columns/0/.style = {column name = $n$},
		columns/2/.style = {column name = M\"obius},
		columns/1/.style = {column name = \texttt{exp\_phi}}
		]{example3.dat}
	\end{minipage}~\begin{minipage}{.48\linewidth}
		\begin{tikzpicture}
		\begin{loglogaxis}[width=.95\linewidth, 
		legend pos = north west, ymax = 6e2, height = .3\textheight]
		\addplot table[x index = 0, y index = 1] {example3.dat};
		\addplot table[x index = 0, y index = 2] {example3.dat};
		\legend{\texttt{exp\_phi}, M\"obius}
		\end{loglogaxis}
		\end{tikzpicture}
	\end{minipage}	\caption{Timings for the computation of the cumulative 
		measure in the security system, described
		in Section~\ref{sec:example3}.}
	\label{fig:security-times}
\end{figure}

We have tested our implementation and measured the time 
required to compute this measure. The results are reported 
in Figure~\ref{fig:security-times}. Beside some overhead when dealing
with small dimensions (and times below $0.01$ seconds), the
approach relying on the restarted Krylov method (labeled by \texttt{exp\_phi}) is faster than the uniformization method included in M\"obius. 

\subsection{Sensitivity analysis}

As a last example, we consider the case of a sensitivity analysis. For simplicity, 
we consider once more the model of Section~\ref{sec:example1}, and we assume
to be interested in changing the parameter $\rho_2$. The only ingredient missing
is computing the derivative of $Q$ with respect to $\rho_2$, which in this case
is simply given by the matrix
\[
  \frac{\partial Q}{\partial \rho_2} = \begin{bmatrix}
  -1 & 1 \\
   & \ddots & \ddots \\
   & & -1 & 1 \\
   & & & 0 \\
  \end{bmatrix}. 
\]
We have implemented the function computing the derivative of the measure
following the approach described in Section~\ref{sec:sensitivity}, 
and the performance of the algorithm is reported in Figure~\ref{fig:sensitivity}, 
for the time $T = 1$. We see that also in this case the scalability
of the asymptotic cost of the 
algorithm is linear with respect to the number of states, as
expected. 

\begin{figure}
	\centering
	\begin{minipage}{.6\linewidth}
		\bigskip 
		\begin{tikzpicture}
		\begin{loglogaxis}[legend pos = north west, 
		xlabel = \# of slots ($n$), ylabel = Time (s), 
		width = .9\linewidth, height = 5cm]
		\addplot table {sensitivities.dat};
		\addplot[domain = 512 : 1024000, dashed] {1e-5 * x};
		\legend{\texttt{quad\_exp}, $\mathcal O(n)$}
		\end{loglogaxis}
		\end{tikzpicture}
	\end{minipage}~\begin{minipage}{.35\linewidth}
	\centering
	\pgfplotstabletypeset[
	columns/0/.style = {column name = $n$}, 
	columns/1/.style = {column name = Time (s)}
	]{sensitivities.dat}
\end{minipage}

\caption{Time needed to approximate the sensitivity of the
	average number of clients at time 
	$1$ depending on the total number of slots. The sizes range from 
	$2^{8}$ to $2^{20}$.}
\label{fig:sensitivity}
\end{figure}

As demonstrated by the experiments, the Krylov based approach often outperforms
the classical uniformization method. This could be explained by the fact that
the Krylov method performs the 
number of restarts required to achieve a certain accuracy given a specific choice
of $\pi_0^T$ and $r$; in the case of the uniformization, instead, one has to choose a small
timestep when dealing with stiff problems, ignoring the effect of the left
and right vectors $\pi_0$ and $r$. 

\section{Conclusions}

We have presented a novel point of view on the formulation
of availability, reliability, and performability measures
in the setting of Markov chains. The main contribution
of this work is to provide a systematic way to rephrase 
these measures in terms 
of bilinear forms defined by appropriate matrix functions. 

A dictionary translating the most common measures in the
field of Markov modeling to the one of matrix functions
has been described. We have proved that, leveraging the
software available for the efficient computation of the
action of $f(Q)$ on a vector, we can easily devise a machinery
that evaluates these measures with the same or better
performances that states of the art solvers, such as 
the one included in M\"obius (which implements
the uniformization method). 

In particular, our solver seems to be more robust to
unbalanced rates in the matrix and stiffness
in general, which can make a dramatic
difference in some cases, as showcased by our numerical experiments. 

The new formulation allows to study measures'
 sensitivity by a new perspective, namely Frechét derivatives. 
 This appears to be a promising reformulation that, along 
 with providing efficient numerical procedures, might
 give interesting theoretical insights in the future. 
 
 We expect that this new setting will allow to devise
 efficient method tailored to the specific structure of 
 Markov chains arising, for instance, from different
 high-level modeling languages. 
 
 Several problems remain open for further study. For instance, 
 we have analyzed the use of Krylov methods with restarts, 
 but the use of rational Krylov methods appear promising
 as well. Moreover, often the infinitesimal generator have
 particular structures induced by the high-level formalism
 used to model the Markov chain --- as it has often
 been noticed in the literature --- which we have not exploited here. 
 These topics will be subject to future investigations. 

\appendix 

\renewcommand{\thesection}{Appendix~\Alph{section}}
\section{Continuous time Markov chains and reward structures}
\renewcommand{\thesection}{\Alph{section}}

Markov models are typically defined using high level formalism and then translated into \acp{CTMC}. Most often, the
modeler is interested in extracting relevant information on the Markov
chain, such as probability of breakdown (or, from the opposite perspective, 
of completing operations \emph{without} breakdown). 
This kind of information is described abstractly using a \emph{reward structure}, 
which is defined at the higher level --- using the \ac{MRP} language. These
measures assess the performance and dependability of the system, and 
so are called \emph{performability measures}~\cite{M82}. 

\subsection{Definition of models and measures}
\label{subsec:model}

Given a CTMC, a natural question is if the limit for $t \to \infty$
of the probability distribution $\pi(t)$ exists, and whether it depends on
the initial choice $\pi_0$. To characterize this behavior, we need to
introduce the concepts of \emph{irreducibility}. 

\begin{definition}
	Let $X(t)$ be a Markov chain with infinitesimal generator $Q$, and consider
	the directed graph $\mathcal G$ with nodes the set of states of $X(t)$, and 
	with an edge from $i$ to $j$ if and only if $Q_{ij} > 0$. We say that 
	$X(t)$ is \emph{irreducible} if for for every two states $i, j$ there
	exists a path connecting $i$ to $j$. We say that $X(t)$ is
	\emph{reducible} if it is not \emph{irreducible}. 
\end{definition}

We shall partition the Markov processes in two classes: the 
irreducible ones, called \emph{transient}, which in the finite case are also \emph{positive recurrent}, 
and the reducible ones, which are called \emph{terminating}.

Intuitively, a Markov chain is irreducible if there is always a nonzero
probability of jumping from $i$ to $j$, possibly through some intermediate
jumps. This property is sufficient to guarantee the existence and uniqueness
of the \emph{steady-state} vector $\pi$, the limit of $\pi(t)$ for 
$t \to \infty$. 

From the linear algebra point of view, it is often useful to notice that 
a Markov process is reducible if and only if the matrix $Q$ can be made
block upper triangular by permuting the rows and column (with the same
permutation).

We refer to the book \cite{TB17} for a more detailed analysis on the classification
of Markov chains, which we do not discuss further. 

\begin{theorem}
	Let $X(t)$ be an irreducible Markov chain with a finite number of states
	and infinitesimal generator $Q$. Then, there exists a unique positive
	vector $\pi$ such that $\pi^T = \lim_{t \to \infty} \pi_0^T e^{tQ}$, 
	and $\pi$ does not depend on $\pi_0$. Moreover, $\pi^T$ generates
	the left kernel of $Q$. 
\end{theorem}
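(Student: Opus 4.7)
The plan is to reduce the statement to the Perron--Frobenius theorem for irreducible nonnegative matrices. First I would observe that, by construction, the row sums of $Q$ vanish, so $Q\mathbbm{1} = 0$; hence $0$ is always an eigenvalue of $Q$ and $\mathbbm{1}$ lies in the right kernel. A Gershgorin argument confines the spectrum of $Q$ to the union of disks $D(Q_{ii},|Q_{ii}|)$, each of which lies in the closed left half-plane and touches the imaginary axis only at the origin; consequently every eigenvalue of $Q$ has non-positive real part, with equality only possibly at $\lambda = 0$.

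Next I would perform the classical shift and set $M := \alpha I + Q$ with $\alpha > \max_i |Q_{ii}|$. Then $M$ is entrywise nonnegative with strictly positive diagonal, and its off-diagonal zero/nonzero pattern coincides with that of $Q$. Irreducibility of the chain therefore translates to $M$ being irreducible, and the strictly positive diagonal ensures aperiodicity, i.e.\ primitivity. The Perron--Frobenius theorem then applies: $\rho(M)$ is a simple eigenvalue admitting strictly positive right and left eigenvectors, unique up to scaling, and it is the only eigenvalue of $M$ of maximum modulus. Since $M\mathbbm{1} = \alpha\mathbbm{1}$ exhibits $\alpha$ as an eigenvalue with the positive right eigenvector $\mathbbm{1}$, and the Perron eigenvalue is the only one admitting a positive eigenvector, we conclude $\rho(M) = \alpha$. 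There is therefore a unique (up to scaling) positive left eigenvector $\pi$ satisfying $\pi^T M = \alpha\pi^T$, equivalently $\pi^T Q = 0$, and I would normalize it so that $\pi^T\mathbbm{1} = 1$. Simplicity of $\alpha$ in $\sigma(M)$ translates directly to $0$ being a simple eigenvalue of $Q$, so the left kernel of $Q$ is one-dimensional and spanned by $\pi$.

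For the convergence claim, primitivity of $M$ gives that all eigenvalues of $M$ different from $\alpha$ are strictly less than $\alpha$ in modulus; translating back to $Q$, every eigenvalue other than $0$ has strictly negative real part. I would then invoke the spectral/Jordan decomposition to write $Q = N$ on the invariant complement of $\ker Q$, where the restriction has spectrum in $\{\mathrm{Re}\,\lambda < 0\}$, and introduce the spectral projector $P_0 := \mathbbm{1}\pi^T$ onto $\ker Q$. One verifies $P_0^2 = P_0$, $Q P_0 = P_0 Q = 0$ directly from $Q\mathbbm{1} = 0$ and $\pi^T Q = 0$, confirming that $P_0$ is indeed the spectral projector associated with the eigenvalue $0$. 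It follows that $e^{tQ} = P_0 + e^{tQ}(I - P_0)$ and the second term decays exponentially in $t$, so $e^{tQ} \to \mathbbm{1}\pi^T$ as $t \to \infty$. Consequently $\pi_0^T e^{tQ} \to (\pi_0^T\mathbbm{1})\,\pi^T = \pi^T$, using that $\pi_0$ is a probability vector and thus $\pi_0^T \mathbbm{1} = 1$.

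The main obstacle is the passage from the Perron--Frobenius statement about $M$ to the refined spectral information on $Q$ needed for exponential convergence: one must ensure not only that $0$ is a simple eigenvalue of $Q$, but also that every other eigenvalue sits strictly inside the left half-plane. This requires choosing the shift $\alpha$ strictly larger than $\max_i|Q_{ii}|$ so that $M$ becomes \emph{primitive}, not merely irreducible; this is precisely the subtlety which distinguishes the continuous-time setting from the discrete-time one, where periodicity could still obstruct convergence.
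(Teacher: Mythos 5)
Your argument is correct, but there is nothing in the paper to compare it against: the theorem is stated without proof, as a classical fact about finite irreducible \acp{CTMC}, with the reader referred to the textbook cited just before it. Your Perron--Frobenius derivation is therefore a self-contained proof where the paper supplies none, and it fits naturally with the machinery the paper does set up elsewhere: the shift $M = \alpha I + Q$ is precisely the $M$-matrix structure recorded in Lemma~\ref{lem:m-matrix}, and the Gershgorin localization you use is the one the paper invokes immediately after that lemma. Every step checks out --- $Q\mathbbm{1}=0$, irreducibility of $M$ inherited from the transition graph, identification of $\rho(M)=\alpha$ via the positive right eigenvector $\mathbbm{1}$, simplicity of the Perron root giving a one-dimensional left kernel spanned by a positive $\pi$ with $\pi^T\mathbbm{1}=1$, the rank-one spectral projector $P_0=\mathbbm{1}\pi^T$, and exponential decay of $e^{tQ}(I-P_0)$. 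One small correction of emphasis: primitivity of $M$ is not the crux you make it out to be in the continuous-time setting. Your own Gershgorin observation already shows that a disk $D(Q_{ii},|Q_{ii}|)$ meets the imaginary axis only at the origin, so any eigenvalue of $Q$ with zero real part must be $0$ itself; combined with simplicity of the Perron root --- which irreducibility alone delivers, without aperiodicity --- this gives $\mathrm{Re}\,\lambda<0$ for every nonzero eigenvalue. Passing through primitivity is a valid alternative route to the same spectral gap, but the periodicity obstruction you mention is genuinely a discrete-time phenomenon and never threatens the convergence of $e^{tQ}$ here.
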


\subsection{Spectral properties of $Q$ and the steady-state}
\label{sec:spectral}

As we already pointed out, unless the Markov chain is irreducible, 
the steady state probability might not be unique --- and therefore
depend on the specific choice of initial configuration $\pi_0$. 
The uniqueness can be characterized by considering the spectral 
properties of $Q$.

\begin{lemma}
    Let $Q$ be the infinitesimal generator of a continuous time
    Markov chain. Then, $Q$ is singular, and has $\mathbbm 1$ as right
    eigenvector corresponding to the eigenvalue $0$; if
    the Markov process is irreducible, then the left kernel is
    generated by $\pi^T$, the steady-state distribution. 
\end{lemma}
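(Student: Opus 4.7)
The proof plan splits naturally into the two halves of the statement: the assertions about the right kernel (which hold for any CTMC) and the assertion about the left kernel (which requires irreducibility).

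For the right-kernel part, I would argue directly from the definition of $Q$. By construction, the diagonal entry $Q_{ii}=-\sum_{j\ne i}\lambda_{ij}=-\sum_{j\ne i}Q_{ij}$, so every row sums to zero. Equivalently $Q\mathbbm{1}=0$, which simultaneously shows that $\mathbbm{1}$ is a right eigenvector for the eigenvalue $0$ and that $Q$ is singular. This is essentially a one-line computation and should be presented as such.

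For the left-kernel part, the plan is to reduce to Perron--Frobenius via a uniformization trick. Pick $\alpha>0$ small enough that $P:=I+\alpha Q\ge 0$ entrywise; this is possible because the off-diagonal entries of $Q$ are nonnegative and its diagonal entries are bounded below. Then $P$ is a stochastic matrix (its rows sum to $1$, since $Q\mathbbm{1}=0$ gives $P\mathbbm{1}=\mathbbm{1}$), and irreducibility of the CTMC is equivalent to irreducibility of the directed graph of $P$, because adding the identity only adds self-loops. By the Perron--Frobenius theorem applied to the irreducible nonnegative matrix $P$, the spectral radius $1$ is a simple eigenvalue, and its left eigenspace is one-dimensional and spanned by a strictly positive vector. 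Translating back, the left kernel of $Q=\alpha^{-1}(P-I)$ is one-dimensional and generated by a strictly positive vector. Combining with the previous theorem, which identifies $\pi^T$ as a left null vector of $Q$, forces this generator to be a scalar multiple of $\pi^T$.

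The main obstacle is the left-kernel assertion: one must justify that \emph{simplicity} of the eigenvalue $0$ of $Q$ (not merely existence of a null vector) follows from irreducibility. My approach handles this cleanly by offloading the work to Perron--Frobenius for stochastic matrices; the only delicate points are the choice of $\alpha$ and verifying that irreducibility passes between $Q$ and $P$, both of which are routine. An alternative route would be to invoke the previous theorem's uniqueness of $\pi$ directly, but I find the Perron--Frobenius reduction more self-contained and also yields positivity of $\pi$ as a byproduct, which is useful elsewhere in the paper.
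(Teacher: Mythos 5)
Your proof is correct. For the record, the paper itself states this lemma without proof: it is presented as a known fact, and the left-kernel assertion is essentially a restatement of the unnumbered theorem in the preceding subsection, which already declares that $\pi^T$ generates the left kernel of $Q$ for an irreducible finite chain. So there is no in-paper argument to compare against line by line; what you have done is supply the missing justification. Your route is the standard and sound one: the right-kernel claim is the one-line observation that the rows of $Q$ sum to zero (note the paper's displayed definition of $Q_{ii}$ has the subscripts transposed, but the convention $\pi(t)^T = \pi_0^T e^{tQ}$ and the worked example make clear that row sums, not column sums, vanish, so your reading is the intended one), and the uniformization $P = I + \alpha Q$ with $0 < \alpha < 1/\max_i |Q_{ii}|$ followed by Perron--Frobenius cleanly delivers \emph{simplicity} of the eigenvalue $1$ of $P$, hence one-dimensionality of the left kernel of $Q$ and positivity of its generator. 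This is slightly more than the lemma asks for, and more than a bare citation of the preceding theorem would give, which is a reasonable trade-off; the two delicate points you flag (the choice of $\alpha$ and the transfer of irreducibility between the graph of $Q$ and that of $P$) are indeed routine and handled correctly.
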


The matrix $Q$ has another distinctive feature from the linear
algebra point of view, which we will use repeatedly in what follows. 

\begin{lemma} \label{lem:m-matrix}
	Let $Q$ be the infinitesimal generator of a Markov chain. 
	Then,  $-Q$ is an $M$-matrix, i.e., there exists a positive $\alpha$ 
    such that 
    \[
      M = \hat M - \alpha I, 
    \]
    with $\hat M$ being a non-negative matrix with spectral radius
    bounded by $\alpha$. 
\end{lemma}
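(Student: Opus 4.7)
The plan is to produce $\hat M$ explicitly by shifting $Q$ along the diagonal, and then to exploit the fact that the row sums of $Q$ vanish to control both nonnegativity and the spectral radius at once. Concretely, I would set
\[
\alpha \;:=\; \max_{i} \bigl( - Q_{ii} \bigr) \;=\; \max_{i} \sum_{j \neq i} Q_{ij}, \qquad \hat M \;:=\; Q + \alpha I,
\]
so that the desired decomposition $Q = \hat M - \alpha I$ holds by construction (I am reading the statement as giving an M-matrix factorization of $-Q$, so that $-Q = \alpha I - \hat M$).

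First I would verify that $\hat M \geq 0$ entrywise. The off-diagonal entries of $\hat M$ coincide with those of $Q$, and these are the exponential rates $\lambda_{ij} \geq 0$ by definition of the infinitesimal generator. The diagonal entries are $\hat M_{ii} = Q_{ii} + \alpha = \alpha - \sum_{j \neq i} Q_{ij} \geq 0$ by the choice of $\alpha$.

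Next I would control the spectral radius. The key observation is that the columns of $Q$ (or rows, depending on the convention of how the generator acts) sum to zero: indeed $Q_{ii} = -\sum_{j\ne i} Q_{ji}$ encodes exactly the condition that probability is conserved, which translates to $\mathbbm{1}^T Q = 0$ or $Q \mathbbm{1} = 0$ depending on the layout used in the paper (the excerpt uses $\pi(t)^T = \pi_0^T e^{tQ}$ together with $Q \mathbbm 1 = 0$ from the earlier lemma on spectral properties). Consequently $\hat M \mathbbm 1 = \alpha \mathbbm 1$, so every row sum of the nonnegative matrix $\hat M$ equals $\alpha$. From the standard bound $\rho(\hat M) \leq \|\hat M\|_\infty$ applied to a nonnegative matrix whose row sums are all equal to $\alpha$, I obtain $\rho(\hat M) \leq \alpha$; in fact equality holds, since $\mathbbm 1$ is a positive eigenvector with eigenvalue $\alpha$ and Perron--Frobenius forces this eigenvalue to coincide with the spectral radius.

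This is essentially a bookkeeping argument, so I do not anticipate a real obstacle; the only subtle point is choosing $\alpha$ small enough that the identity $Q = \hat M - \alpha I$ remains meaningful (any $\alpha \geq \max_i |Q_{ii}|$ works, and the minimal choice produces the tightest bound). Should the paper require strict inequality $\rho(\hat M) < \alpha$, the argument would have to be adjusted by taking a slightly larger $\alpha$, but as written the non-strict bound suffices to match the stated definition of M-matrix.
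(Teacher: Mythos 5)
Your argument is correct, and it is worth noting that the paper itself states Lemma~\ref{lem:m-matrix} \emph{without} proof, so there is no authors' argument to compare against; what you give is the standard diagonal-shift proof that the authors evidently considered folklore. The three steps --- taking $\alpha = \max_i(-Q_{ii})$ and $\hat M = Q + \alpha I$, checking $\hat M \geq 0$ entrywise from the sign pattern of the generator, and bounding $\rho(\hat M) \leq \lVert \hat M \rVert_\infty = \alpha$ via the constant row sums $\hat M \mathbbm{1} = \alpha \mathbbm{1}$ --- are all sound, and your explicit handling of the row-versus-column normalization is the right call (the paper's displayed formula for $Q_{ii}$ is column-normalized, while the lemma asserting $Q\mathbbm{1}=0$ and the tridiagonal example are row-normalized, so one must commit to the latter as you do). The only pedantic loose end is the degenerate generator $Q=0$, for which your minimal choice gives $\alpha = 0$ rather than the strictly positive $\alpha$ the statement demands; your closing remark that any larger $\alpha$ also works already disposes of this.
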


Lemma~\ref{lem:m-matrix} implies, by a straightforward application 
of classical Gerschgorin theorems \cite{varga2010gervsgorin}, that 
all the eigenvalues of $Q$ are contained in the left half 
of the complex plane. 

The spectral features of $Q$ are tightly connected 
with the asymptotic behavior
of the Markov chain. In particular, as we have
pointed out in the previous section, a reducible 
process has a matrix $Q$ that can be permuted to be
block upper triangular. More precisely, one can reorder
the entries as 
\begin{equation} \label{eq:permutedprocess}
  \Pi Q \Pi^T = \begin{bmatrix}
    Q_{\mathbf u} & Q_{\mathbf u \mathbf d} \\
    Q_{\mathbf d \mathbf u} & Q_{\mathbf d} \\
  \end{bmatrix}, \qquad 
  Q_{\mathbf d \mathbf u} = 0, 
\end{equation}
where $\Pi$ is a permutation that lists the indices in $\mathbf u$ 
first, and then the ones in $\mathbf d$. The set $u$ are the
transient states of the process, whereas $\mathbf d$ contains the
recurrent ones. The matrix $Q_{\mathbf u \mathbf d}$ contains the probability
rates of jumping from a state in $\mathbf u$ to a state in $\mathbf d$. 
The following characterization will be relevant in the following. 

\begin{lemma} \label{lem:block-invertible}
    Let $Q$ be the infinitesimal generator of a terminating process, 
    and $\Pi$ the permutation identified in \eqref{eq:permutedprocess}. 
    Then, $Q_{\mathbf u}$ is invertible. 
\end{lemma}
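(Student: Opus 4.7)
The plan is to show that $-Q_{\mathbf u}$ is a non-singular $M$-matrix, which is equivalent to the claim. The strategy splits naturally into two parts: a spectral localization argument confining the eigenvalues of $Q_{\mathbf u}$ to the closed left half-plane, followed by an argument exploiting the transience of the states in $\mathbf u$ to rule out the value $0$ from the spectrum.

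For the first step I would observe that $-Q_{\mathbf u}$ inherits from $-Q$ the $M$-matrix sign pattern (non-positive off-diagonal entries), so by Lemma~\ref{lem:m-matrix} one can write $-Q_{\mathbf u} = \alpha I - \hat{B}$ with $\hat{B}$ component-wise non-negative and $\alpha > 0$. Since the row sums of $Q$ vanish and the block $Q_{\mathbf u \mathbf d}$ has non-negative entries, the row sums of $Q_{\mathbf u}$ are non-positive, hence a direct application of Gerschgorin's theorem places $\sigma(-Q_{\mathbf u})$ in the right half-plane, i.e., $\sigma(Q_{\mathbf u}) \subseteq \{z \in \mathbb{C} : \mathrm{Re}(z) \leq 0\}$.

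The crucial part is ruling out $0$ as an eigenvalue of $Q_{\mathbf u}$, and this is where the terminating-process hypothesis enters. By definition of transient state, for every $i \in \mathbf u$ there is a directed path in the reachability graph of $Q$ reaching $\mathbf d$; in particular, the row sums of $Q_{\mathbf u}$ are strictly negative for at least those indices $i$ admitting a direct edge towards $\mathbf d$. Assume by contradiction that $Q_{\mathbf u} x = 0$ for some nonzero $x$; the padded vector $\tilde{x} := (x, 0)^T$ lies in $\ker Q$, because $Q_{\mathbf d \mathbf u} = 0$ makes the block product trivial on the lower component. A maximum-principle argument then yields the desired contradiction: pick $i^* \in \mathbf u$ at which $|x_{i}|$ is maximal, assume WLOG $x_{i^*} > 0$, and note that $(Q_{\mathbf u} x)_{i^*} = 0$ together with the sign structure of $Q_{\mathbf u}$ forces $x_j = x_{i^*}$ for every neighbour $j \in \mathbf u$ of $i^*$ and forbids a direct edge from $i^*$ to $\mathbf d$. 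Iterating along a shortest path from $i^*$ to $\mathbf d$ inside the reachability graph, one eventually reaches a state $i_k$ which does have a direct transition to $\mathbf d$; there the row-sum inequality is strict, contradicting $x_{i_k} = x_{i^*} > 0$.

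The main obstacle I expect is making the maximum-principle bookkeeping rigorous, in particular handling complex $x$ (reducible to the real case by splitting into real and imaginary parts and considering $\pm x$) and carrying the equality $x_j = x_{i^*}$ through several iteration steps without losing track of the sign structure. A cleaner but less elementary alternative I would keep in mind is the semigroup identity $-Q_{\mathbf u}^{-1} = \int_0^\infty e^{tQ_{\mathbf u}}\,dt$: since all states in $\mathbf u$ are transient, $e^{tQ_{\mathbf u}}$ represents the surviving probability mass in $\mathbf u$, which decays exponentially, so the integral converges and directly exhibits the inverse of $-Q_{\mathbf u}$.
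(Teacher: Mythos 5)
Your proof is correct, and it takes a genuinely different --- and in fact more robust --- route than the paper's. The paper argues via plain Gerschgorin: it claims that \emph{every} row of $Q_{\mathbf u}$ is strictly diagonally dominant, on the grounds that the row sums of $Q$ vanish while part of each row's mass leaks into $Q_{\mathbf u \mathbf d}$. That strict inequality holds only for the rows of states having a \emph{direct} transition into $\mathbf d$; for a transient state whose routes to $\mathbf d$ all pass through other states of $\mathbf u$, the corresponding row of $Q_{\mathbf u \mathbf d}$ vanishes, one gets only weak dominance, and $0$ lies on the boundary of that Gerschgorin disc --- so the paper's argument, as written, does not exclude it. Your maximum-principle step is exactly the standard repair (Taussky's refinement of Gerschgorin for chain/irreducibly diagonally dominant matrices): weak dominance everywhere, strict dominance in the rows adjacent to $\mathbf d$, and a path from every row to a strictly dominant one, along which equality propagates and kills the putative kernel vector. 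What your approach buys is precisely the general case; what the paper's buys is brevity, at the cost of an implicit assumption that every up state can fail in one step. The loose ends you flag are benign: since $Q_{\mathbf u}$ is real you may take a real kernel vector and replace $x$ by $-x$ if needed, and the existence of a path from every state of $\mathbf u$ to $\mathbf d$ follows from finiteness of the state space together with transience (a communicating class of $\mathbf u$ with no exit would be closed, hence recurrent). Your alternative via $-Q_{\mathbf u}^{-1} = \int_0^\infty e^{tQ_{\mathbf u}}\,dt$ is also viable, but note that the exponential decay of the surviving mass in $\mathbf u$ must then be established by a separate probabilistic argument (e.g., a uniform positive probability of leaving $\mathbf u$ within a fixed time, iterated), since deducing it from $\sigma(Q_{\mathbf u})$ lying in the open left half-plane would be circular.
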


\begin{proof}
	We claim that every row of $Q_{\mathbf u}$ has the sum of the
	off-diagonal elements strictly smaller than the modulus
	of the diagonal one. We assume that the matrix has been permuted
	already, so we may write without loss of generality:
	\[
	  \mathbf u = \{ 1, \ldots, i' \}, \qquad 
	  \mathbf d = \{ i' + 1, \ldots, n \}. 
	\]
	We have that, for every $i \leq i'$: 
	\[
	  \sum_{j \leq i'} ( Q_{\mathbf u} )_{ij} + \sum_{i'+1 \leq j \leq n}
	    (Q_{\mathbf u \mathbf d})_{i, j-i'} = 0 \implies
      \sum_{j \leq i', j \neq i} ( Q_{\mathbf u} )_{ij} + (Q_{\mathbf u})_{ii} < 0. 
	\]
	Considering that the diagonal element has negative sign, and 
	all the others are positive, we conclude that 
	\[
	  |(Q_{\mathbf u})_{ii}| > \sum_{j \leq i', j \neq i} |( Q_{\mathbf u} )_{ij}|, 
	\]
	and therefore $0$ is not included in any of the Gerschgorin circles of 
	$Q_{\mathbf u}$, and $Q_{\mathbf u}$ is invertible \cite{varga2010gervsgorin}. 
\end{proof}

\subsection{Available solution methods}
\label{sec:STDmethods}

Solving the Markov chain means computing the probability vector $\pi(t)$
at a given time $t$ or, if the chain is irreducible, computing the steady-state probability vector $\pi$. 
As discussed in the following, often it is required to compute also $\int_0^t \pi_i(\tau) d\tau$ for some index $i$. We
recall in this section the most well-known methods to tackle
this task in the generic case (without particular
assumption on the Markov chain). 

A simple approach is the direct computation of the matrix
exponential $e^{tQ}$, which in turns allows 
to obtain $\pi(t)^T = \pi_0^T e^{tQ}$. However, this is 
only feasible if the number of states is small, because
the complexity is cubic in the number of states. 

Another strategy is to compute the Laplace transform~\cite{TB17} of $\pi(t)$, denoted by $\bar{\pi}(s)$, solving the linear system $\bar{\pi}^T(s)\big(sI-Q\big)=\bar{\pi}^T_0$, obtained applying the Laplace transform to the Kolmogorov forward equation, and then anti-transform $\bar{\pi}(s)$ producing $\pi(t)$. 
The parameter $s$ can be chosen so that $sI-Q$ is non-singular, the linear system can be solved exploiting favorable properties of $Q$, e.g., sparseness, and the known term $\bar{\pi}_0$ is often easy to compute because $\pi_0$ in dependability and performance models is highly structured.
This method can be applied only to relatively small chains, being the anti-transform a costly operation, but can tackle relatively large $t$.
Numerical integration~\cite{RT89} from zero to $t$ of the Kolmogorov forward equation and/or of 
\[
\begin{cases}
\dot{L}(t)&= L^T(t) Q+\pi^T_0\text{,}\\
L(0) &= 0\text{,}
\end{cases}
\]
where $L=\int_0^t \pi(\tau)d\tau$, is another alternative.
Unfortunately, in dependability models the parameters can have different order of magnitudes, e.g., in a cyber-physical system the mean time to failure of an hardware component is considerably different from the mean time to failure of a software component, and similarly in performability models the performance-oriented and fault-related parameters can have different scalings.
Thus, numerical integration is a good choice only if the chosen method, for the particular case under analysis, has been proved to be highly resilient to stiffness.
For general chains and arbitrary $t$, the \emph{uniformization method}~\cite{RT89}
is commonly adopted by commercial level software tool, such as
M\"obius \cite{DCCDDDSW02}. 
An heuristically chosen $q$ such that $q>\max_i |Q_{ii}|$ allows to compute the truncate series expansion of $\pi(t)$ and $\int_0^t\pi(\tau)d\tau$ in terms of powers of $Q^*=(\frac{Q}{q}+I)$. In the numerical experiments, 
we will compare the performances of the
approach proposed in this paper with the solver bundled with M\"obius. 

\subsection{Matrix functions} \label{sec:basics}

Matrix functions are ubiquitous in applied mathematics, and appear
in diverse applications. We refer the reader to \cite{higham2008functions}
and the references therein for more detailed information. The definition
of a matrix function can be given in different (equivalent) ways. Here
we recall the one based on the Jordan form. 

\begin{definition}[Matrix function] \label{def:matfun}
    Let $A$ be a matrix with spectrum $\sigma(A) = \{ \lambda_1, \ldots, \lambda_n \}$, and $f(z)$ a function that is analytic on the spectrum
    of $A$. Let $J = V^{-1} A V$ be the Jordan form of $A$, 
    with $J = J_1(\lambda_{j_1}) \oplus \ldots \oplus J_k(\lambda_{j_k})$ being its decomposition
    in elementary Jordan blocks; we define
    the \emph{matrix function} $f(A)$ as 
    \[
      f(A) = V f(J) V^{-1}, \qquad 
      f(J) = f(J_1) \oplus \ldots \oplus  f(J_k)
    \]
    where for an $m \times m$ Jordan block we have:
    \[
      f(J(\lambda)) = \begin{bmatrix}
       f(\lambda) & f'(\lambda) & \ldots & f^{(m)}(\lambda) \\
       & \ddots & \ddots & \vdots \\
       & & \ddots & f'(\lambda) \\
       & & & f(\lambda)
      \end{bmatrix}
    \]
\end{definition}

Definition~\ref{def:matfun} is rarely useful (directly)
from the computational point 
of view. In most cases, computation of matrix functions 
is performed relying on the Schur form, on block diagonalization
procedures, on contour integration, 
or on rational and polynomial approximation (we refer to
\cite{higham2008functions} and the references therein for
a comprehensive analysis of advantages and disadvantages of
the different approaches). 

\begin{remark} \label{rem:weak-matfundef}
	Note that, in order to apply Definition~\ref{def:matfun}, we do not
	necessarily need $f(z)$ to be analytic on the whole spectrum of $A$; 
	we just need $f$ to have derivatives of order $m$ at every point corresponding
	to an eigenvalue with Jordan blocks of size at most $m + 1$. 
\end{remark}

Some examples of matrix functions that appear frequently
in applied mathematics are
the matrix exponential $e^{A}$, the inverse $A^{-1}$
and the resolvents $(sI - A)^{-1}$, 
the square root $A^{\frac 12}$ and the matrix logarithm $\log(A)$. 

\renewcommand{\thesection}{Appendix~\Alph{section}}
\section{Rephrasing the measures} \label{sec:rephrasing}
\renewcommand{\thesection}{\Alph{section}}

This section aims to provide the building blocks that enables
the translation of the measures from the Markov chain setting, 
where they are expressed as expected values of a random
variable obtained as a function of $X(t)$, to the more 
computationally-friendly matrix function form.

\begin{proof}[Proof of Lemma~\ref{lem:acc}]
    Recall that, by Definition~\ref{def:inst-perfomability}, we have 
    \[
      \rcum(t) = \int_{0}^t \rinst(\tau)\ d\tau = 
      \pi_0^T \left( \int_{0}^t e^{\tau Q}\ d\tau \right) r, 
    \]
    by linearity of the integral. Assume, for
    simplicity, that $Q$ is diagonalizable, and let 
    $Q = VDV^{-1}$ be its eigendecomposition; 
    we can write 
    \[
      \rcum(t) = \pi_0^T V \left( \int_0^{t} e^{\tau D}\ d\tau \right)V^{-1} r = 
      \pi_0^T V f(D) V^{-1} r, \quad 
      f(z) = \int_{0}^t e^{\tau z}\ d\tau. 
    \]
	By direct integration we get $f(z) = \frac{e^{\tau z} - 1}{z}$. 
	Finally, using the relation $V f(A) V^{-1} = f(VAV^{-1})$ we obtain 
	the sought equality 
	$\rcum(t) = \pi_0^T f(Q) r$. The general statement follows by density of 
	diagonalizable matrices, together with the continuity of $f(z)$. 
\end{proof}

\begin{remark}
	We shall note that $t\varphi_1(tz)$ 
	defined in Lemma~\ref{lem:acc}, 
	despite being defined piece-wise, is an analytic function, and 
	is in fact defined by the power-series
	\[
	  t\varphi_1(tz) = t \cdot \left( 1 + \frac{tz}{2} + \frac{(tz)^2}{3!} + \ldots 
	   + \frac{(tz)^j}{(j+1)!} + \ldots \right), 
	\]
	which is convergent for all $z \in \mathbb C$. 
	Nevertheless, the formula $(e^{tz} - 1)/z$ cannot be used
	directly to evaluate the function at $Q$, because is not well-defined
	when $z = 0$, and this point is always included in the spectrum, 
	since $Q$ is singular. The notation $\varphi_1(z) = (e^z - 1)/z$ 
	is often used
	in the description of exponential integrators. We refer to \cite{gander2013paraexp}
	and the references therein for more details. 
\end{remark}

A similar statement can be given also for the steady-state case. Nevertheless, 
to achieve this we need to consider a discontinuous function, and  
this can cause difficulties in the numerical use of such 
characterization. 

\begin{lemma} \label{lem:steady-state}
    Let $X(t)$ a continuous irreducible Markov process with infinitesimal generator
    $Q$, and assume an initial
    distribution of probability $\pi_0$. The steady-state distribution
    $\pi$ can be expressed as follows:
    \[
      \pi^T = \pi_0^T \delta(Q), \qquad 
      \delta(z) = \begin{cases}
        1 & \text{if } z = 0 \\
        0 & \text{otherwise}
      \end{cases}
    \]
\end{lemma}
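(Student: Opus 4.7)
The plan is to compute $\delta(Q)$ directly via the Jordan form of $Q$ and show it equals the rank-one spectral projector $\mathbbm 1 \pi^T$ onto the kernel of $Q$. Once this is in hand, the identity $\pi_0^T \delta(Q) = \pi^T$ will follow from the fact that $\pi_0$ is a probability distribution.

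First, I would invoke the spectral facts on $Q$ recalled in appendix~\ref{sec:spectral}: for an irreducible finite-state CTMC, $0$ is a simple eigenvalue of $Q$, with right eigenvector $\mathbbm 1$ and corresponding left eigenvector $\pi^T$ normalized so that $\pi^T \mathbbm 1 = 1$; all remaining eigenvalues have strictly negative real part. In particular, the Jordan block associated with $0$ has size $1 \times 1$. Next I would apply Definition~\ref{def:matfun}, reading $\delta(Q)$ through the Jordan decomposition $Q = V J V^{-1}$. Although $\delta$ is not analytic, the weakened hypotheses of Remark~\ref{rem:weak-matfundef} are enough: at $z = 0$ the Jordan block is trivial, so only the value $\delta(0) = 1$ is required; at every nonzero eigenvalue, $\delta$ vanishes identically in a neighborhood and hence all its derivatives are zero, so $\delta$ evaluated at the corresponding Jordan block is the zero matrix. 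Choosing the first column of $V$ to be $\mathbbm 1$, the corresponding first row of $V^{-1}$ must be $\pi^T$ (this is the normalization forced by $V^{-1} V = I$), and therefore
\[
\delta(Q) = V\, (e_1 e_1^T)\, V^{-1} = \mathbbm 1 \pi^T.
\]
The conclusion then follows from bilinearity: $\pi_0^T \delta(Q) = (\pi_0^T \mathbbm 1)\, \pi^T = \pi^T$, since the components of $\pi_0$ sum to one.

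The only subtle step is the application of Definition~\ref{def:matfun} to the non-analytic function $\delta$: one must argue that the weak regularity of Remark~\ref{rem:weak-matfundef} suffices, which hinges on the fact that $0$ has trivial Jordan structure (a consequence of irreducibility) and that $\delta$ is locally constant elsewhere. Once this technicality is addressed, the remainder is a transparent Jordan-form computation, and no limiting argument on $e^{tQ}$ as $t \to \infty$ is needed; indeed, one could alternatively justify the identity by observing that on the spectrum of $Q$ the scalar function $\delta(z)$ coincides with the pointwise limit of $e^{tz}$, thereby matching the known expression $\lim_{t\to\infty} e^{tQ} = \mathbbm 1 \pi^T$.
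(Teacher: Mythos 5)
Your proof is correct, but it takes a genuinely different route from the paper. The paper argues by a limit: it observes that $\pi(t)^T=\pi_0^Te^{tQ}$, that the spectrum of $Q$ lies in $\{\Re(z)<0\}\cup\{0\}$, and that $f_t(z)=e^{tz}$ converges pointwise to $\delta(z)$ on that set, so that $e^{tQ}\to\delta(Q)$ and the claim follows from the definition of $\pi$ as $\lim_{t\to\infty}\pi(t)$. You instead compute $\delta(Q)$ outright from the Jordan form, identifying it as the spectral projector $\mathbbm 1\pi^T$ and then using $\pi_0^T\mathbbm 1=1$. Each approach has something to recommend it. Yours is arguably the more rigorous: the paper's step from pointwise convergence of $f_t$ on the spectrum to convergence of $f_t(Q)$ tacitly requires convergence of the relevant derivatives at any defective eigenvalues (true here, since $t^ke^{tz}\to 0$ for $\Re(z)<0$, but unstated), whereas your argument sidesteps limits entirely and, as a bonus, proves the paper's subsequent Remark that $\delta(Q)=\mathbbm 1\pi^T$ in the same stroke. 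The trade-off is that your route leans on the prior identification of the steady state with the normalized left null vector of $Q$ (the appendix theorem) and on the simplicity of the eigenvalue $0$ under irreducibility, while the paper's limit argument connects directly to the definition of $\pi$ as $\lim_{t\to\infty}\pi_0^Te^{tQ}$ with less spectral bookkeeping. Your handling of the non-analyticity of $\delta$ via Remark~\ref{rem:weak-matfundef} (trivial Jordan block at $0$, $\delta$ locally constant elsewhere) is exactly right and mirrors the paper's own remark following the lemma.
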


\begin{proof}
    We notice that, for any finite time $t$, $\pi(t) = e^{tQ}$. Since the
    spectrum of $Q$ is contained in $\{ \Re(z) < 0 \} \cup \{ 0 \}$, 
    it is sufficient to check that 
    $f_t(z) = e^{tz}$ converges to $\delta(z)$ as $t \to \infty$ 
    on this set. It is clear that, for every $z \neq 0$ 
    in the left half plane, we indeed have $f_t(z) \to 0$, and 
    the claim
    follows noting that $f_t(0) = 1 = \delta(z)$ independently of $t$. 
\end{proof}

\begin{remark}
	The fact that $\delta(z)$ is not analytic on the
	spectrum of $Q$ is not an obstruction in the application 
	of Definition~\ref{def:matfun}. In fact, since $0$ is
	a simple eigenvalue, the definition is still applicable 
	in view of Remark~\ref{rem:weak-matfundef}. 
\end{remark}

\begin{remark}
    If the process is irreducible
    then the matrix function $\delta(Q)$ takes the form 
    $\delta(Q) = \mathbbm 1 \pi^T$ and therefore, since $\pi_0^T \mathbbm 1 = 1$ for every probability distribution $\pi_0$, it is clear that the
    steady-state is independent of $\pi_0$. 
\end{remark}

 
\bibliographystyle{elsarticle-harv}
\bibliography{mfun-paper}

\end{document}